\numberwithin{equation}{section}
\providecommand{\eprint}[2][]{\href{http://arxiv.org/abs/#2}{arXiv:#2}}
\begin{document}

\newtheorem{theorem}{Theorem}[section]
\newtheorem{thm}[theorem]{Theorem}
\newtheorem{lemma}[theorem]{Lemma}
\newtheorem{proposition}[theorem]{Proposition}
\newtheorem{corollary}[theorem]{Corollary}

\theoremstyle{definition}
\newtheorem{definition}[theorem]{Definition}
\newtheorem{example}[theorem]{Example}

\theoremstyle{remark}
\newtheorem{remark}[theorem]{Remark}

\newenvironment{magarray}[1]
{\renewcommand\arraystretch{#1}}
{\renewcommand\arraystretch{1}}

\newcommand{\quot}[2]{
{\lower-.2ex \hbox{$#1$}}{\kern -0.2ex /}
{\kern -0.5ex \lower.6ex\hbox{$#2$}}}

\newcommand{\mapor}[1]{\smash{\mathop{\longrightarrow}\limits^{#1}}}
\newcommand{\mapin}[1]{\smash{\mathop{\hookrightarrow}\limits^{#1}}}
\newcommand{\mapver}[1]{\Big\downarrow
\rlap{$\vcenter{\hbox{$\scriptstyle#1$}}$}}
\newcommand{\liminv}{\smash{\mathop{\lim}\limits_{\leftarrow}\,}}

\newcommand{\Set}{\mathbf{Set}}
\newcommand{\Art}{\mathbf{Art}}
\newcommand{\solose}{\Rightarrow}

\newcommand{\specif}[2]{\left\{#1\,\left|\, #2\right. \,\right\}}

\renewcommand{\bar}{\overline}
\newcommand{\de}{\partial}
\newcommand{\debar}{{\overline{\partial}}}
\newcommand{\per}{\!\cdot\!}
\newcommand{\Oh}{\mathcal{O}}
\newcommand{\sA}{\mathcal{A}}
\newcommand{\sB}{\mathcal{B}}
\newcommand{\sC}{\mathcal{C}}
\newcommand{\sD}{\mathcal{D}}
\newcommand{\sE}{\mathcal{E}}
\newcommand{\sF}{\mathcal{F}}
\newcommand{\sG}{\mathcal{G}}
\newcommand{\sH}{\mathcal{H}}
\newcommand{\sI}{\mathcal{I}}
\newcommand{\sL}{\mathcal{L}}
\newcommand{\sM}{\mathcal{M}}
\newcommand{\sP}{\mathcal{P}}
\newcommand{\sU}{\mathcal{U}}
\newcommand{\sV}{\mathcal{V}}
\newcommand{\sX}{\mathcal{X}}
\newcommand{\sY}{\mathcal{Y}}
\newcommand{\sN}{\mathcal{N}}
\newcommand{\sT}{\mathcal{T}}
\newcommand{\Aut}{\operatorname{Aut}}
\newcommand{\Id}{\operatorname{Id}}
\newcommand{\Tr}{\operatorname{Tr}}
\newcommand{\Mor}{\operatorname{Mor}}
\newcommand{\Def}{\operatorname{Def}}
\newcommand{\Fitt}{\operatorname{Fitt}}
\newcommand{\Supp}{\operatorname{Supp}}
\newcommand{\Hom}{\operatorname{Hom}}

\newcommand{\Spec}{\operatorname{Spec}}
\newcommand{\Der}{\operatorname{Der}}
\newcommand{\coder}{\operatorname{Coder}}
 
\newcommand{\tot}{\operatorname{tot}}
\newcommand{\ten}{\bigotimes}
\newcommand{\mA}{\mathfrak{m}_{A}}

\newcommand{\N}{\mathbb{N}}
\newcommand{\R}{\mathbb{R}}
\newcommand{\Z}{\mathbb{Z}}

\newcommand{\K}{\mathbb{K}\,}
\newcommand\C{\mathbb{C}}
 
\newcommand\Tot{\operatorname{Tot}}


\newcommand{\rh}{\rightarrow}
\newcommand{\contr}{{\mspace{1mu}\lrcorner\mspace{1.5mu}}}

\newcommand{\bi}{\boldsymbol{i}}
\newcommand{\bl}{\boldsymbol{l}}

\newenvironment{acknowledgement}{\par\addvspace{17pt}\small\rm
\trivlist\item[\hskip\labelsep{\it Acknowledgement.}]}
{\endtrivlist\addvspace{6pt}}

\title[On the abstract BTT theorem]{On the abstract Bogomolov-Tian-Todorov Theorem}
\date{\today}

\author{Donatella Iacono}
\address{\newline  Universit\`a degli Studi di Bari,
\newline Dipartimento di Matematica,
\hfill\newline Via E. Orabona 4,
I-70125 Bari, Italy.}
\email{donatella.iacono@uniba.it}
\urladdr{\href{http://www.dm.uniba.it/~iacono/}{www.dm.uniba.it/~iacono/}}

\begin{abstract}
We describe an abstract version of the Theorem of  Bogomolov-Tian-Todorov, whose underlying idea is already contained in various papers by Bandiera, Fiorenza, Iacono, Manetti. More explicitly, we prove an algebraic criterion for a differential graded Lie algebras to be homotopy abelian.
Then, we  collect together  many examples and applications  in deformation theory and other settings.

\end{abstract}

\subjclass[2010]{14D15, 17B70,   13D10}
\keywords{Differential graded Lie algebras, Batalin-Vilkovisky algebras}

\maketitle

\section{Introduction}

Let $X$ be a compact K\"{a}hler manifold. If  $X$ has trivial canonical bundle,
then, the so called Bogomolov-Tian-Todorov (BTT) Theorem states that the  deformations of  $X$ are  unobstructed.
In \cite{Bogomolov},  Bogolomov  proved it
 in the particular case of complex hamiltonian manifolds;
later, Tian \cite{Tian} and Todorov \cite{Todorov} proved  independently
the theorem for  compact K\"{a}hler manifolds
with trivial canonical bundle. 
More algebraic proofs of BTT theorem, based on $T^1$-lifting Theorem
and degeneration of the Hodge spectral sequence,  were  given in
\cite{zivran} for $\K=\mathbb{C}$  and in \cite{Kaw1,FM2}
for any $\K$ as above.

For  compact K\"{a}hler manifolds with torsion canonical bundle, the theorem follows from the more general fact that the derived infinitesimal deformations are unobstructed.
The guiding principle  is that in characteristic zero any deformation problem is controlled by a differential graded Lie algebra (DG-Lie algebra), with quasi-isomorphic DG-Lie algebras control the same deformation problem \cite{GoMil1,Kon}. More precisely, the deformation functor associated with the geometric problem is isomorphic to the deformation functor $\Def_L$ associated with a DG-Lie algebra $L$ via Maurer-Cartan equation up to gauge equivalence. 

 Therefore, it is worth to have an explicit description of a DG-Lie algebra associated with the problem. 
It turns out that the obstructions to the smoothness of the functor $\Def_L$ are contained in the cohomology vector space $H^2(L)$. However, if $L$ is an abelian DG-Lie algebra then, even if $H^2(L)$ is not zero,   the functor $\Def_L$ is smooth, i.e., it has no obtructions.
In particular, it is actually enough to prove that the DG-Lie algebra is homotopy abelian, 
 i.e., quasi-isomorphic to an abelian DG-Lie algebra, to assure that the associated
deformation functor is smooth.

For a compact complex manifold $X$, the infinitesiaml deformation are controlled by the Kodaira-Spencer DG-Lie algebras  $KS_X$ (Example \ref{example KS DGLA}). If $X$ is a compact K\"{a}hler manifold with trivial or torsion canonical bundle, then the Lie version of BTT theorem asserts that $KS_X$ is homotopy abelian (Section \ref{section def complex manifold}).
For $\K=\mathbb{C}$,  this was first proved in the seminal paper by  W.M. Goldman and  J.J. Millson \cite{GoMil2}, see also \cite{manRENDICONTi}.
For any algebraically closed field $\K$ of characteristic 0, this was proved in a completely algebraic way in \cite{algebraicBTT}, using the degeneration of the Hodge-to-de Rham spectral sequence and the notion of Cartan homotopy.

In \cite{algebraicBTT}, the proof  involves
$L_\infty$-algebras and $L_\infty$-morphisms and  it is based on a series of algebraic results that were also applied in other context \cite{semireg,donacoppie,Rugg,FM16}.
In this paper, we review some  of these ideas of independent interest  to establish the following  criterion,  that we call Abstract Bomolov-Tian-Todorov Theorem, for homotopy abelianity of a DG-Lie algebra (Theorem \ref{theorem abstract btt}).

 \begin{theorem}(Abstract BTT Theorem) Let $L$ and $ M$ be DG-Lie algebras over a field $\K$ of characteristic 0 and $H\subseteq M$ a DG-Lie subalgebra. Assume that there exists a linear map 
 \[ \bi\in \Hom^{-1}_{\K}(L,M),\qquad a\mapsto \bi_a,\]
 of degree $-1$ such that:
\begin{enumerate}
\item\label{item.ch} $[\bi_a,\bi_b]=0$ and $\bi_{[a,b]}=[\bi_a,d\bi_b]$ for every $a,b\in L$;\smallskip
\item\label{item.cc} $d \bi_a + \bi_{da} \in H$ for every $a\in L$;\smallskip
\item the inclusion $H \hookrightarrow M$  is injective in cohomology;\smallskip
\item the induced morphism of complexes $\bi\colon L \to (M/H)[-1]$ is injective in cohomology.\smallskip
\end{enumerate}
Then, the DG-Lie algebra $L$ is homotopy abelian.
\end{theorem}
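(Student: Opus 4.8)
The plan is to realise $\bi$ as the linear part of an $L_\infty$-morphism from $L$ into an abelian DG-Lie algebra, and then to read off homotopy abelianity from a general transfer principle. First I would introduce the Lie derivative $\bl_a := d\bi_a + \bi_{da}$, which by hypothesis (2) takes values in $H$. A direct Cartan-calculus computation using only (1) shows that $\bl$ is a chain map, $d\bl_a=\bl_{da}$, and in fact a strict morphism of DG-Lie algebras $\bl\colon L\to H$, i.e. $\bl_{[a,b]}=[\bl_a,\bl_b]$. The same identities yield the factorisation $\bi_{[a,b]}=[\bi_a,\bl_b]$ (expand $d\bi_b=\bl_b-\bi_{db}$ and use $[\bi_a,\bi_{db}]=0$), which is the key algebraic relation driving the argument.

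Since $H$ is only a subalgebra, the quotient $M/H$ carries no bracket, so $(M/H)[-1]$ is an abelian DG-Lie algebra. Hypothesis (2) guarantees that reduction modulo $H$ kills $\bl$, and a short sign check shows that the induced $\bar\bi\colon L\to (M/H)[-1]$ is a morphism of complexes, as asserted in (4). The heart of the proof is to promote $\bar\bi$ to an $L_\infty$-morphism $\phi\colon L\to (M/H)[-1]$ with $\phi_1=\bar\bi$. I would build this from the Cartan-homotopy data following Fiorenza--Manetti: the contraction $\bi$ is precisely a homotopy trivialising the DG-Lie morphism $\bl$ inside $M$, and reducing this trivialisation modulo $H$ (legitimate exactly because $\bl(L)\subseteq H$) produces the higher components $\phi_n$. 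Condition (1), in the form $[\bi_a,\bi_b]=0$ together with the magic formula, is what makes the $\phi_n$ satisfy the $L_\infty$-morphism relations, while the surjectivity $H^*(M)\twoheadrightarrow H^*(M/H)$ extracted from (3) via the long exact sequence of $0\to H\to M\to M/H\to 0$ is what lets one solve inductively for the primitives defining $\phi_n$ in the quotient.

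Finally I would invoke the transfer lemma: an $L_\infty$-morphism into an abelian DG-Lie algebra whose linear part is injective in cohomology forces the source to be homotopy abelian. Passing to minimal models, the target becomes $(H^*(M/H),0)$ with trivial brackets, and the $L_\infty$-relations, read off degree by degree against the injective map $H^*(\bar\bi)$ of (4), successively annihilate the induced bracket $q_2$ and all higher Massey brackets $q_n$ on $H^*(L)$; hence the minimal model of $L$ is abelian and $L$ is homotopy abelian. As a quadratic sanity check one recovers the vanishing of the bracket on $H^*(L)$ directly: for cocycles $a,b$ one has $\bl_b=d\bi_b$ exact in $M$, so by (3) it is exact already in $H$, say $\bl_b=d\eta_b$; then $\overline{\bi_{[a,b]}}=\overline{[\bi_a,\bl_b]}$ equals $\pm d_{M/H}\overline{[\bi_a,\eta_b]}$, and (4) gives $[[a,b]]=0$.

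The main obstacle is the middle step: constructing $\phi$ with the correct higher components and verifying the full tower of $L_\infty$-relations while tracking the Koszul signs coming from the degree $-1$ shift. This is where (1), (2), (3) must be used in concert---(1) for the relations, (2) to descend to the quotient, (3) to guarantee that the required primitives exist---and where the bulk of the technical work, otherwise packaged in the cited Cartan-homotopy formalism, resides.
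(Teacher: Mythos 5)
Your argument is correct in substance, but it is not the paper's proof: it essentially reconstructs the original $L_\infty$-route of \cite{cone,algebraicBTT}, which this paper deliberately set out to replace by a purely DG-Lie argument. Your preliminary identities are all right ($\bl$ is a strict DG-Lie morphism into $H$, $\bi_{[a,b]}=[\bi_a,\bl_b]$, and your quadratic sanity check killing the bracket on $H^*(L)$ is a correct computation). The final minimal-model step is also the standard one: with abelian target and $f_1$ injective, the morphism relations give $f_1q_n=0$ inductively, hence $q_n=0$. The one genuinely sketchy point is the middle step you yourself flag: as written, your induction on the components $\phi_n$ neither verifies that the error term at stage $n$ is a cocycle in the relevant $\Hom$-complex nor that surjectivity of $H^*(M)\to H^*(M/H)$ suffices to correct it; to make this rigorous one invokes the packaged facts that a Cartan homotopy with $\bl(L)\subseteq H$ is exactly a \emph{linear} $L_\infty$-morphism $a\mapsto(\bl_a,\bi_a)$ into the mapping cone of $\chi\colon H\hookrightarrow M$, and that this cone is homotopy abelian under hypothesis (3) --- precisely the content of \cite{cone,algebraicBTT}. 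The paper sidesteps all of this: it encodes conditions (1)--(2) as a \emph{strict} DG-Lie morphism $\varphi\colon\K[s]\otimes L\to M$, $\varphi(s\otimes a)=\bi_a$, $\varphi(1\otimes a)=\bl_a$, where $s$ has degree $-1$, $s^2=0$, $d(s)=1$ (so $[\bi_a,\bi_b]=0$ mirrors $s^2=0$, and strictness is the Cartan calculus), and then works with honest homotopy fibres: with $\alpha\colon L\to\K[s]\otimes L$, $a\mapsto 1\otimes a$, the projection $TW(\alpha)\to L$ is a quasi-isomorphism because $\K[s]\otimes L$ is contractible, $TW(\chi)$ is homotopy abelian by hypothesis (3) and the homotopy-fibre lemma, the induced $\phi\colon TW(\alpha)\to TW(\chi)$ is injective in cohomology by hypothesis (4) via the quasi-isomorphism $TW(\chi)\to(M/H)[-1]$, and the transfer lemma for DG-Lie morphisms concludes. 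What each route buys: yours generalizes naturally to $L_\infty$ inputs and makes the vanishing of the higher Massey brackets explicit, at the price of the $L_\infty$/minimal-model machinery; the paper's stays self-contained, using only strict morphisms, the factorisation lemma, and two elementary lemmas.
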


According to \cite{Periods,algebraicBTT}, every linear map $\bi\colon L\to M$ of degree $-1$ that satisfies the previous condition \eqref{item.ch} is called Cartan homotopy (Section \ref{Section cartan homoto}). 
According to  \cite{FM16},  any pair $(\bi,H)$ satisfying the above conditions  \eqref{item.ch} and \eqref{item.cc} is called Cartan calculus.

The previous theorem is already  implicitly proved in  \cite{cone,algebraicBTT}, using 
$L_\infty$-algebras and $L_\infty$-morphisms.  Here, we show an alternative self contained proof based on the same ideas but involving  only DG-Lie  algebras.
The main motivation of this paper is to gather together various ideas in one place and  to provide an easier and more accessible proof (only DG-Lie algebras). Moreover, we collect many examples and applications of the Abstract BTT Theorem in deformation theory and in other derived settings (Section \ref{sez. example} and Section \ref{section further appli}). Among the others, we include the classical BTT Theorem for Calabi-Yau  manifolds of \cite{algebraicBTT}, the logarithmic version for log Calaby-Yau pairs $(X,D)$ of \cite{donacoppie}, the DG-Lie algebra associated with a Batalin-Vilkovisky algebra with the degeneration property \cite{FiMaformal,donacoppie}, and the DG-Lie algebra whose associated coderivation DG-Lie algebra has the splitting principle \cite{Rugg,bandieraFormal}.

The notion of homotopy abelianity   is closely related to the notion of formality. A DG-Lie algebra is  formal if it is quasi-isomorphic to its cohomology and so 
a DG-Lie algebra is homotopy abelian if and only if it is formal and $H^*(L)$ is abelian.  Then, homotopy abelianity condition is stronger  than formality and this could explain why it is easier to provide a criterion for it. Indeed, there are not so many analog of the previous theorem  that guarantees the formality of a DG-Lie algebra.  We refer to \cite{kaledin,lunts,FiMaformal,ManettiFormal,bandieraFormal}  and reference therein for formality criterion.

From the geometric point of view, homotopy abelianity assures the smoothness of the associated moduli problem while formality only implies that the singularity are not too bad (see \cite{GoMil1,ElenaFormal,ManettiFormal,LMDT} for more details).
If we are only interested in the smoothness of the problem, then instead of Hypothesis (4) it is enough to require that only $H^2(\bi)$ is injective (Remark \ref{remark dopo teorema diagramma semireg}).

The paper goes as follows: Section \ref{section back ground DGLA} is included for the non expert readers and it contains the relevant definitions and properties about DG-Lie algebras. Section \ref{Section cartan homoto} is devoted to the proof of the Abstract BTT Theorem. Some applications and examples are collected in Section \ref{sez. example}, while Section \ref{section further appli} contains some generalizations and further applications.

Throughout the paper, we work over an algebraically
closed field $\K$ of characteristic 0, if it is not differently specified.

\section{Background on DG-Lie algebras}\label{section back ground DGLA}

A \emph{differential graded Lie algebra} (DG-Lie algebra) is
the data of a triple $(L,d,[\ , \ ])$, where   $(L,d)$ is a differential graded vector space (DG-vector space) and    $ [\ , \ ] \colon L \times L \to L$ is a  bilinear map 
of degree 0 (called bracket), such that the following conditions are satisfied:
\begin{enumerate}

\item (graded  skewsymmetry)
$[x,y]=-(-1)^{ij} [y,x]\in L^{i+j}$,  for every $x\in L^i$ and $y\in L^j$;

\item (graded Jacobi identity) $ [x,[y,z]] = [[x,y],z] + (-1)^{ij} [y, [x,z]]$, for every $x\in L^i$, $y\in L^j$ and $z\in L$;

\item (graded Leibniz rule) $ d[x,y] =[ dx,y]+ (-1)^{i}[x, dy]$, for every $x\in L^i$ and $y\in L$.

\end{enumerate}

In particular, the Leibniz rule implies that the bracket of a DG-Lie algebra $L$ induces  a structure of graded Lie algebra on its cohomology $H^*(L)= \bigoplus_iH^i(L)$.
A DG-Lie algebra is called \emph{contractible} if $H^*(L)=0$.
A DG-Lie algebra is called \emph{abelian} if its bracket is trivial.
 
\begin{example}
If $L=\bigoplus_i L^i$ is a DG-Lie algebra, then $L^0$ is a Lie algebra in the
usual sense; vice-versa, every Lie algebra is a differential
graded Lie algebra concentrated in degree 0.

\end{example}
\begin{example}
Let $(V,d_V)$ be a differential graded vector space over $\K$ and $\operatorname{Hom}^i_\K(V,V)$
the space of the linear map $V\to V$ of degree $i$. Then,
$\operatorname{Hom}^*_\K(V,V)=\bigoplus_i\operatorname{Hom}^i_\K(V,V)$
is a DG-Lie algebra with bracket
\[
[f,g]=fg-(-1)^{\deg(f)\deg(g)}gf,
\]
and differential $d$ given by
\[
d(f)=[d_V,f]=d_Vf-(-1)^{\deg(f)}fd_V.
\]
For later use, we point out that by K\"{u}nneth formula, there exists
a natural isomorphism
\[ H^*(\Hom^*_\K(V,V))\xrightarrow{\;\simeq\;}\Hom^*_\K(H^*(V),H^*(V)).\]
\end{example}

\begin{example}\label{exe definizio M[t,dt]}
Let $M$ be a DG-Lie algebra and  $\K[t,dt]$  the differential graded algebra of  polynomial
differential forms over the affine line.  More precisely,
$\K[t,dt]=\K[t] \oplus \K[t]dt$, where $t$ has degree $0$ and $dt$
has degree 1. 
Then, $M[t,dt]=M \otimes \K[t,dt]$ is a DG-Lie algebra.
  As vector space $M[t,dt]$ is generated by elements
of the form $mp(t)+ nq(t)dt$, with $m,n \in M$ and $p(t),q(t)\in
\K[t]$. The differential and  the bracket on $M[t,dt]$ are defined
as follows:
$$
d(mp(t)+ nq(t)dt)=(dm)p(t) + (-1)^{\deg( m)} m p'(t)dt +
(dn)q(t)dt,
$$
$$
[mp(t),nq(t)]=[m,n]p(t)q(t), \ \ \
[mp(t),nq(t)dt]=[m,n]p(t)q(t)dt.
$$
Note that $[mdt,ndt]=0$, for every $m, n \in M$.
\end{example}

\begin{example}\label{example KS DGLA}

Let  $\Theta_X$ be the holomorphic tangent bundle of a complex
manifold $X$. The  \emph{Kodaira-Spencer} DG-Lie  algebra of $X$ is
$$
KS_X=(\bigoplus_i \Gamma(X,\sA_X^{0,i}(\Theta_X))=\bigoplus_i
A_X^{0,i}(\Theta_X) ,d , [ \ , \ ] ),
$$
where $KS_X^i = A_X^{0,i}(\Theta_X)$ is the vector space of the global sections
of the sheaf of germs of the differential $(0,i)$-forms with
coefficients in $\Theta_X$,  $d$ is the opposite of
Dolbeault's differential and the bracket is the extension of the usual bracket of vector fields.
Explicitly, if $z_1, \ldots,z_n$ are local holomorphic coordinates
on $X$, we have
$$
d\left(f d\overline{z}_I \frac{\de}{\de z_i}\right)=-\debar
(f)\wedge d\overline{z}_I \frac{\de}{\de z_i},
$$
$$
\left[f\frac{\de}{\de z_i}\, d\overline{z}_I,g\frac{\de}{\de z_j} \,
d\overline{z}_J\right]=\left(f\frac{\de g}{\de z_i}\frac{\de}{\de z_j}-
g\frac{\de f}{\de z_j}\frac{\de}{\de z_i}\right)\, d\overline{z}_I
\wedge d \overline{z}_J ,\qquad \forall  \ f,g \in \sA_X^{0,0}.
$$

\end{example}

\begin{example}\label{example DGLA pair}
Let $D$ be a submanifold of  a complex
manifold $X$ of codimension 1.  We denote by ${\Theta}_X(-\log D)$ the sheaf of germs of the tangent vectors to $X$ which are tangent to $D$ \cite[Section 3.4.4]{Sernesi}.
Denoting by $\mathcal{I}\subset \mathcal{O}_X$   the ideal sheaf of $D$ in $X$, then $\Theta_X(-\log  D)$ is the subsheaf of the derivations of the sheaf $\mathcal{O}_X$ preserving the ideal sheaf $\mathcal{I}$ of $D$. Moreover, we have the following short exact sequence
\[
0 \to {\Theta}_X(-\log D) \to {\Theta}_X \to N_{D / X} \to 0.
\]
Since we are in codimension 1, the sheaf  $\Theta_X (-\log D)$  is dual to the sheaf $\Omega^1_X(\log D)$ of logarithmic differentials, so it is in particular locally free, see for instance \cite[p. 72]{deligne},  \cite[Chapter 2]{librENSview} or \cite[Chapter 8]{voisin}.

Then, we can define the DG-Lie algebra of the pair $(X,D)$
$$
KS_{(X,D)}=(\bigoplus_i \Gamma(X,\sA_X^{0,i}(\Theta_X(-\log D))),  d, [\ , \ ]).$$
Note that $KS_{(X,D)}$ is a DG-Lie subalgebra of $KS_X$.
\end{example}

\subsection{Morphisms of DG-Lie algebras}

A \emph{morphism} of DG-Lie algebras is  a linear map $\varphi\colon L \to M$ 
that preserves degrees and commutes with brackets and
differentials. A \emph{quasi-isomorphism} of DG-Lie algebras is a morphism
that induces an isomorphism in cohomology.

Two  DG-Lie algebras $L$ and $M$ are said to be
\emph{quasi-isomorphic} if they are equivalent under the
equivalence relation generated by 
 quasi-isomorphisms.
 
\begin{example}
  
Let $M$ be a DG-Lie algebra and  $M[t,dt]$ the DG-Lie algebra introduced in Example \ref{exe definizio M[t,dt]}.
Then, for every $a \in \K$,  we can define the evaluation morphism
$$
e_a:M[t,dt] \to M,
$$
$$
e_a(\sum m_it^i +n_it^i dt)=\sum m_i a^i.
$$
Note that, every $e_a$ is a morphism of DG-Lie algebras which is a left
inverse of the inclusion $i:M \to M[t,dt]$, i.e., $e_a \circ i = \Id_M$. 
In particular,   $e_a$  is also a surjective quasi-isomorphism. We often use the short notation $m(a)=e_a(m(t,dt))$, for every $m(t,dt) \in M[t,dt]$.

\end{example}

For every two morphisms of DG-Lie algebras $f:L \to N$ and $g: M \to N$, we can consider the pull-back:
 \begin{center}
$\xymatrix{L\times_N M \ar[r]^{f'}  \ar[d]^{g'}     &M  \ar[d]^{g} \\
          L \ar[r]_f&  N.  \\ }$
\end{center}
Note that if $g$ (or $f$) is a surjective quasi-isomorphism then  $g'$ (or $f'$) is also a surjective quasi-isomorphism.

\begin{lemma} \label{lem. factorisation}(Factorisation Lemma)
Let $f:L \to M$ be a morphism of  DG-Lie algebras, then there exists a DG-Lie algebra $P$ and a factorisation
\begin{center}
$\xymatrix{L \ar[rr]^f\ar[dr]_{i} &  &M \\
          & P,\ar[ru]_g &    \\ }$
\end{center}
such that $g: P\to M$ is a surjective morphism and $i:L\to P$ is an injective quasi-isomorphism  (which is a right inverse of a surjective quasi-isomorphism).

 \end{lemma}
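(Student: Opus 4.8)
The plan is to realise $P$ as a mapping path object, dualising the usual mapping cylinder, with the DG-Lie algebra $M[t,dt]$ of Example~\ref{exe definizio M[t,dt]} serving as a path space for $M$. Concretely, I would take $P$ to be the fibre product
\[ P = L \times_{M} M[t,dt], \]
formed with respect to the morphism $f\colon L \to M$ and the evaluation morphism $e_0\colon M[t,dt]\to M$ at $t=0$. Thus $P$ is the DG-Lie subalgebra of $L\times M[t,dt]$ consisting of the pairs $(l, m(t,dt))$ with $f(l) = m(0)$, and it inherits its differential and bracket from the product.

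The first observation is that the projection $g'\colon P \to L$ onto the first factor is a surjective quasi-isomorphism. Indeed $g'$ is exactly the pullback of $e_0$ along $f$, and $e_0$ is a surjective quasi-isomorphism; so the property of pullbacks recorded just before the statement applies. I would then define $i\colon L\to P$ by $i(l) = (l, \iota(f(l)))$, where $\iota\colon M \to M[t,dt]$ denotes the inclusion of the constant forms. This is a morphism of DG-Lie algebras by the universal property of the fibre product, since $f\circ \Id_L = f$ and $e_0\circ\iota\circ f = f$ (as $e_0\circ\iota = \Id_M$). By construction $g'\circ i = \Id_L$, so $i$ is a right inverse of the surjective quasi-isomorphism $g'$; in particular $i$ is injective. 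Since $g'$ induces an isomorphism on cohomology and $g'_*\circ i_* = \Id$, the map $i_*$ is the inverse of $g'_*$, whence $i$ is itself a quasi-isomorphism.

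It then remains to produce the surjection $g$ and to check the factorisation. I would take $g\colon P\to M$ to be evaluation at $t=1$ of the second component, that is $g = e_1\circ f'$, where $f'\colon P \to M[t,dt]$ is the projection onto the second factor. As a composition of morphisms of DG-Lie algebras, $g$ is a morphism, and the factorisation is immediate: for $l\in L$ one has $g(i(l)) = e_1(\iota(f(l))) = f(l)$, since a constant form is fixed by every evaluation, so $g\circ i = f$.

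The only step that requires a genuine argument is the surjectivity of $g$, and here I would exploit the freedom in the second component. Given $x\in M$ of degree $k$, the element $(0, x\,t)$ lies in $P$, because $f(0) = 0 = (x\,t)|_{t=0}$, and it has degree $k$ since $t$ has degree $0$; moreover $g(0, x\,t) = (x\,t)|_{t=1} = x$. Hence $g$ is surjective. This completes the factorisation $f = g\circ i$ with $g$ surjective and $i$ an injective quasi-isomorphism that is a right inverse of the surjective quasi-isomorphism $g'$, as required.
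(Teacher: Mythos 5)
Your proposal is correct and takes essentially the same approach as the paper: you build $P$ as the fibre product of $f$ with an evaluation from the path object $M[t,dt]$, take $i$ to be the inclusion via constant forms, and define $g$ by the other evaluation, with surjectivity witnessed by $(0,\,xt)$ exactly mirroring the paper's $(0,(1-t)m)$. The only difference is the immaterial swap of the roles of $e_0$ and $e_1$ (i.e.\ the substitution $t\mapsto 1-t$), so the two proofs coincide up to relabelling.
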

 
\begin{proof}
An explicit factorisation can be defined as follow.
Let $P_f$ be the   DG-Lie algebra
\[ P_f= \{(x,m(t,dt))\in L\times M[t,dt]\mid m(1)=f(x)\};\]
note that $P_f$ is given by the   pull back diagram
\begin{center}
$\xymatrix{P_f \ar[r] \ar[d]_p   &M[t,dt] \ar[d]^{e_1} \\
          L \ar[r]_f&  M,  \\ }$
\end{center}
where $p$ is the projection on the first factor.
In particular, since $e_1$ is a surjective quasi-isomorphism, $p$ is also a surjective quasi-isomorphism.
Next, define 
\[
i:L\to P_f  \qquad i(x)=(x,f(x)), \qquad \forall \ x \in L;
\]
and
\[
g\colon P_f \to M \qquad g(x, m(t,dt))= m(0), \qquad \forall  \  (x, m(t,dt)) \in P_f.
\]
The morphism $g$ is surjective: for any $ m \in M$, there exists $(0,(1-t)m) \in P_f$  such that $g( 0,(1-t)m)=e_0((1-t)m)=m$.
As regard the morphism $i$, it is a right inverse of $p$, i.e., $p i=\Id_L$ and so it is an injective quasi-isomorphism.
Finally, we have  
\[(g \circ i) (x)= g(x,f(x))=f(x),  \qquad \forall \ x \in L.
\]
\end{proof}

\begin{corollary}\label{coro. unico tetto per quasi iso}
Let $L$ and $M$ be  DG-Lie algebras. Then, $L$ and $M$  are quasi-isomorphic if and only if there exist  a DG-Lie algebra $P$ and two surjective quasi-isomorphisms $p:P \to L$ and $q: P \to M$.

\end{corollary}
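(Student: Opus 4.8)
The plan is to prove the two implications separately; the reverse implication is immediate, and the entire content lies in producing a single roof of surjective quasi-isomorphisms out of an arbitrary chain. If a DG-Lie algebra $P$ with surjective quasi-isomorphisms $p\colon P\to L$ and $q\colon P\to M$ exists, then in particular $p$ and $q$ are quasi-isomorphisms, so $L$ and $M$ are identified by the equivalence relation generated by quasi-isomorphisms, and hence are quasi-isomorphic by definition. For the converse I would argue by induction on the length $n$ of a chain $L=N_0,N_1,\dots,N_n=M$ in which each consecutive pair $N_{j-1},N_j$ is linked by a quasi-isomorphism pointing in one of the two directions. The case $n=0$ is trivial, taking $P=L$ with identity maps.

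The base case $n=1$ is where the Factorisation Lemma enters. Given a single quasi-isomorphism $f\colon A\to B$, Lemma \ref{lem. factorisation} produces a factorisation $A\xrightarrow{\,i\,}P_f\xrightarrow{\,g\,}B$ in which $i$ is an injective quasi-isomorphism that is a right inverse of a surjective quasi-isomorphism $p\colon P_f\to A$ (so $p\,i=\Id_A$), and $g$ is surjective with $g\,i=f$. Since $H^*(g)\,H^*(i)=H^*(f)$ and both $H^*(i)$ and $H^*(f)$ are isomorphisms, $H^*(g)$ is an isomorphism; thus $g$ is a surjective quasi-isomorphism, and $(P_f;\,p,g)$ is a roof of surjective quasi-isomorphisms over $A$ and $B$. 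If instead the quasi-isomorphism points from $B$ to $A$, the symmetric argument applies.

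For the inductive step, suppose the chain of length $n-1$ from $L$ to $N_{n-1}$ has already been replaced by a roof $L\xleftarrow{\,p\,}P\xrightarrow{\,q\,}N_{n-1}$ of surjective quasi-isomorphisms, and that the final link $N_{n-1},N_n=M$ yields, by the base case, a roof $N_{n-1}\xleftarrow{\,r\,}Q\xrightarrow{\,s\,}M$ of surjective quasi-isomorphisms. I would then form the pull-back $R=P\times_{N_{n-1}}Q$:
\begin{center}
$\xymatrix{R \ar[r] \ar[d]   & Q \ar[d]^{r} \\
          P \ar[r]_{q}&  N_{n-1}.  \\ }$
\end{center}
By the stability property of pull-backs recalled before the Factorisation Lemma, since $q$ and $r$ are surjective quasi-isomorphisms, both projections $R\to P$ and $R\to Q$ are again surjective quasi-isomorphisms. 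Composing, the maps $R\to P\xrightarrow{\,p\,}L$ and $R\to Q\xrightarrow{\,s\,}M$ are surjective quasi-isomorphisms, because surjections and quasi-isomorphisms are each stable under composition; hence $R$ is the desired single roof over $L$ and $M$. There is no serious obstacle here: the construction is forced once one has the Factorisation Lemma and the stability of surjective quasi-isomorphisms under base change. The only points deserving care are the bookkeeping of arrow directions along the chain and checking that the composite maps out of the iterated pull-back stay surjective quasi-isomorphisms, both of which are handled cleanly by the two-out-of-three property on cohomology.
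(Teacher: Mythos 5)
Your proof is correct and follows essentially the same route as the paper: the Factorisation Lemma to make each quasi-isomorphism surjective, followed by fibre products and the stability of surjective quasi-isomorphisms under pull-back to collapse the chain to a single roof. The only cosmetic difference is that you factorise one arrow at a time, exploiting the retraction $p\colon P_f\to A$ with $p\,i=\Id_A$ supplied by Lemma \ref{lem. factorisation} and organising the collapse as an induction, whereas the paper straightens each wedge $L\leftarrow K\to N$ in one step by factorising $(f,t)\colon K\to L\times N$; both arguments rest on exactly the same two ingredients.
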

\begin{proof}
The DG-Lie algebras $L$ and $M$ are quasi-isomorphic if and only if there exists a sequence of quasi-isomorphisms 
\[\xymatrix{&K_1\ar[dr]\ar[dl]&&K_2\ar[dr]\ar[dl]& &    &K_n\ar[dr]\ar[dl]&  \\
L&&H_1&&H_2\!\! & \cdots \quad H_{n-1} & &M.}\]
By Factorisation Lemma \ref{lem. factorisation}, we can assume that all the quasi-isomoprhisms in the sequence are surjective.
Indeed, consider the following diagram
\[\xymatrix{&K \ar[dr]^t\ar[dl]_f&&   \\
L&&N}\]
and apply  Factorisation Lemma \ref{lem. factorisation} to the morphism $(f,t)\colon K\to L\times N$ in order to obtain a diagram of quasi-isomorphisms
\[\xymatrix{
&K\ar[dr]^t\ar[dl]_f\ar[d]& \\
L&P\ar[l]\ar[r]&N,  }\]
where the two horizontal arrows are surjective.
Finally, any sequence of surjective quasi-isomorphisms can be replaced by two surjective quasi-isomorphisms using fibre product   and the fact that surjective quasi-isomorphisms are stable under pull backs
\[\xymatrix{&&K_1\times_{H_1}K_2 \ar[dr]\ar[dl]&&\\
&K_1\ar[dr]\ar[dl]&&K_2\ar[dr]\ar[dl]&\\
L&&H_1&&M.}\]

\end{proof}

\begin{definition}
A  DG-Lie algebra $L$ is called  \emph{formal} if it  quasi-isomorphic to $H^*(L)$ (intended as a DG-Lie algebra with trivial differential).

A  DG-Lie algebra $L$ is called  \emph{homotopy abelian} if it is quasi-isomorphic to an abelian DG-Lie algebra.
\end{definition}

\begin{example}
Any DG-vector space is formal (and  abelian as  DG-Lie algebra).  
Let $(V,d_V)$ be a  DG-vector space, then the  DG-Lie algebra $\Hom^*_\K(V,V)$ is formal.
\end{example}

%
%
%

\begin{lemma} \label{lemma transfer}(Transfer Lemma)
Let $f:L\to M$ be a morphism of DG-Lie algebras and denote by $H^*(f): H^*(L) \to H^*(M)$ the induced morphism in cohomology.
 \begin{enumerate} 
\item  If $M$ is homotopy abelian and  $H^*(f)$ is injective, then $L$ is also homotopy abelian. \\
\item  If $L$ is homotopy abelian and  $H^*(f)$ is surjective, then $M$ is also homotopy abelian. \\
\end{enumerate}
\end{lemma}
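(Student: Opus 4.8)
The plan is to treat homotopy abelianity as a property stable under the equivalence relation generated by quasi-isomorphisms, and then to reduce both statements to the situation in which one of the two algebras is \emph{genuinely} abelian. First I would record the elementary but essential remark that homotopy abelianity is a quasi-isomorphism invariant: if $L'$ is quasi-isomorphic to $L$ and $L$ is quasi-isomorphic to an abelian DG-Lie algebra $A$, then $L'$ is quasi-isomorphic to $A$ as well, hence homotopy abelian. By Corollary \ref{coro. unico tetto per quasi iso} this equivalence is always witnessed by a single roof of surjective quasi-isomorphisms, and by the Factorisation Lemma \ref{lem. factorisation} any $f$ splits as $f=g\circ i$ with $i\colon L\to P$ an injective quasi-isomorphism and $g\colon P\to M$ surjective. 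Since $H^*(i)$ is an isomorphism and $L,P$ are quasi-isomorphic, this lets me replace $f$ by a \emph{surjective} morphism with the same injectivity (resp.\ surjectivity) of $H^*(g)$ whenever that is convenient.

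For part (1) I would remove the hypothesis ``$M$ homotopy abelian'' in favour of ``$M$ abelian''. Choosing, via Corollary \ref{coro. unico tetto per quasi iso}, surjective quasi-isomorphisms $u\colon R\to M$ and $R\to A$ with $A$ abelian, I form the pull-back $L'=L\times_M R$. The projection $L'\to L$ is the pull-back of the surjective quasi-isomorphism $u$, hence is itself a surjective quasi-isomorphism, so $L'$ is quasi-isomorphic to $L$; meanwhile the composite $w\colon L'\to R\to A$ is a morphism into an abelian DG-Lie algebra. A short diagram chase through the commuting square, in which every induced map except $H^*(f)$ is an isomorphism, shows $H^*(w)$ is injective. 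Thus part (1) is equivalent to the special case: \emph{a morphism into an abelian DG-Lie algebra that is injective in cohomology forces the source to be homotopy abelian}. Part (2) I reduce instead by the Factorisation Lemma to a surjection $g\colon P\twoheadrightarrow M$ whose source $P$ is homotopy abelian and for which $H^*(g)$ is surjective, so that $\ker g$ and the associated long exact sequence are at my disposal.

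It remains to settle these reduced cases, and this is where I expect the real work to lie. In each one the necessary condition is cheap: if $\varphi\colon N\to A$ kills all brackets (as it must, $A$ being abelian) and $H^*(\varphi)$ is injective, then for cocycles $x,y$ the bracket $[x,y]$ is a cocycle with $\varphi[x,y]=0$, so its class dies in $H^*(N)$ and $H^*(N)$ is abelian; dually, a cohomology-surjective quotient of a homotopy abelian algebra has abelian cohomology. But homotopy abelianity is strictly stronger than abelian cohomology — it amounts to formality \emph{together with} abelian cohomology — so the bracket must be trivialised not only to first order but through all higher, Massey-type operations. I would achieve this by a perturbative argument: fix a decomposition of $N$ as a complex over $H^*(N)$ and inductively correct the chosen representatives so that their mutual brackets vanish on the nose, the cohomological hypothesis supplying at each stage the primitives needed to proceed. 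In the DG-Lie language preferred here this induction is bookkept with the path object $N[t,dt]$ and its evaluations, producing an explicit roof of quasi-isomorphisms between $N$ and the abelian algebra $H^*(N)$.

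The main obstacle is precisely this last step. Abelian cohomology is automatic, but formality is not, and the whole force of the injective (resp.\ surjective) cohomology hypothesis is consumed in showing that the higher brackets can be killed simultaneously rather than merely one at a time. Conceptually it is the assertion that an $L_\infty$-morphism into a trivial $L_\infty$-algebra with injective linear part, or out of one with surjective linear part, must itself be trivial; the plan above is the realisation of that principle by a construction that never leaves the category of DG-Lie algebras, at the cost of the $\K[t,dt]$ bookkeeping.
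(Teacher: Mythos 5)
Your reductions are sound and in fact coincide with the paper's: for part (1), the pull-back $L\times_M K$ along the roof furnished by Corollary \ref{coro. unico tetto per quasi iso} produces a DG-Lie morphism $w$ into a genuinely abelian algebra $A$ with $H^*(w)$ injective, exactly as in the paper's proof. But the step you defer to a ``perturbative argument'' --- inductively correcting representatives to kill Massey-type operations, bookkept with the path object $N[t,dt]$ --- is precisely where your proposal has a genuine gap, and it is also a wrong turn: no induction is needed, and you never carry yours out (you say yourself this is where ``the real work'' lies). The paper closes in one linear-algebra step. Since $w\colon N\to A$ is a \emph{strict} DG-Lie morphism into an abelian target, every bracket is already annihilated on the nose; the only defect is that $w$ need not be a quasi-isomorphism, and over a field this is repaired by composing with a projection. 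Split the complex $A$ and choose a graded vector space $E$ (zero differential, zero bracket) with a chain projection $e\colon A\to E$ such that $H^*(e\circ w)$ is an isomorphism onto $\operatorname{Im}H^*(w)$; any chain map between abelian DG-Lie algebras is automatically a DG-Lie morphism, so $e\circ w\colon N\to E$ is a DG-Lie quasi-isomorphism onto an abelian algebra, and $N$ is homotopy abelian by definition. The injectivity hypothesis is consumed in choosing $E$, not in an infinite induction; your worry about formality and higher brackets does not arise because one never inverts a quasi-isomorphism or transfers structure --- one only composes strict morphisms.

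Part (2) in your write-up is even less complete: after the factorisation reduction you establish only that $H^*(M)$ is abelian (a first-order statement) and then gesture at the same unexecuted induction. The paper's dual trick is again a splitting. Given surjective quasi-isomorphisms $a\colon K\to A$ (with $A$ abelian and trivial differential) and $l\colon K\to L$, the composite $A\to H^*(M)$ through $H^*(a)^{-1}$, $H^*(l)$ and $H^*(f)$ is surjective, so one chooses a graded subspace $h\colon H\to A$ on which it restricts to an isomorphism; pulling back $h$ along $a$ gives $H\times_A K$ with $a'\colon H\times_A K\to H$ a surjective quasi-isomorphism and $f\,l\,h'$ a quasi-isomorphism to $M$, exhibiting $M$ as quasi-isomorphic to the abelian algebra $H$. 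If you wanted to salvage your plan as stated, the honest content of your induction would be homotopy transfer of $L_\infty$-structures --- true, but a much heavier tool that this paper is deliberately avoiding, and in any case your proposal asserts rather than proves that decisive step.
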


\begin{proof}
There are various proofs of this fact, see for instance \cite[Proposition 4.11]{KKP}, \cite[Lemma 1.10]{algebraicBTT}.
We follow the proof given in \cite[Lemma 2.7]{FM16}.
As regard (1),  since $M$ is  homotopy abelian, Corollary \ref{coro. unico tetto per quasi iso} implies the existence of an abelian DG-Lie algebra $A$ and two surjective quasi-isomorphisms
\[\xymatrix{&K \ar[dr]^a\ar[dl]_m &&   \\
M&&A.}\]
Applying the pull back we obtain the diagram
\[\xymatrix{&L\times_M K\ar[dl]_{m'} \ar[r]^{\ f'}&K \ar[dr]^a \ar[dl]_{m} &&   \\
L\ar[r]^f&M &&A,}\]
where $m, m'$ and  $a$ are surjective quasi-isomorphisms and the morphisms  $f$ and $f'$ induce injective morphisms in cohomology.

To conclude the proof it is enough to consider a graded vector space $E$ with a projection $e: A\to E$, such that $eaf'$ is a quasi-isomorphism:
\[\xymatrix{&L\times_M K\ar[dl]_{m'} \ar[r]^{\ f'}&K \ar[dr]^a  && &  \\
L & &&A \ar[r]^e&E.}\]

As regard (2), since $L$ is  homotopy abelian, Corollary \ref{coro. unico tetto per quasi iso} implies the existence of an abelian DG-Lie algebra $A$ with trivial differential and two surjective quasi-isomorphisms
\[\xymatrix{&K \ar[dr]^l\ar[dl]_a &&   \\
A&&L.}\]
Then, we can choose a graded Lie algebra $ H$ together with a morphism $ h: H\to A$ such that the composition
\[
H \stackrel{h}{\longrightarrow} A  \stackrel{H^*(a)^{-1}}{\longrightarrow} H^*(K) \stackrel{H^*(l)}{\longrightarrow} H^*(L)  \stackrel{H^*(f)}{\longrightarrow} H^*(M),  \]
is an isomorphism.
Finally, taking the   fibre product of $h$ and $a$, we obtain a commutative diagram
\[\xymatrix{ &H\times_A \times K \ar[dl]_{a'} \ar[r]^{\ \ h'} &K \ar[dr]^l\ar[dl]_a && &   \\
H\ar[r]^h &A&&L\ar[r]^f &M,}\]
where $a, a'$  and $f l h'$ are   quasi-isomorphisms.

\end{proof}

\begin{remark}\label{remark H^2 inje basta}
From the point of view of deformation theory, we could be only  interested in analysing the obstruction problem. We already noticed that the obstructions of the deformation functor $\Def_L$ associated with a DG-Lie algebra $L$ are contained in the vector space $H^2(L)$. Moreover, any morphism of DG-Lie algebras $f:L\to M$ induces a morphism $f: \Def_L \to \Def_M$ of the associated  deformation functors,  that behaves well with respect to the obstructions. If $M$ is homotopy abelian, then $\Def_M$ is smooth. Therefore, it is enough   that the morphism $H^2(f): H^2(L) \to H^2(M)$ is injective for the smoothness of the functor $\Def_L $.

\end{remark}

\begin{definition}
Let $f:L\to M$ be a morphism of DG-Lie algebras, the homotopy fibre  of $f$ is defined as the DG-Lie algebra
\[
TW(f) = \{(x, m(t,dt)) \in L \times M[t,dt] \ \mid \  m(0)=0, \, m(1)=f(x) \}.
\]
\end{definition}
Note that the projection $TW(f)\to L$ is a morphism of DG-Lie algebras.

\begin{remark}\label{oss TW funtoriale}
Let $f:L\to M$ be a morphism of DG-Lie algebras and  $L\stackrel{i}{\to} P_f\stackrel{g}{\to} M$   the explicit  factorisation, given in the proof of Factorisation Lemma 
\ref{lem. factorisation}. Then, $TW(f) =\ker g$.  
It can be proved that for any other factorisation  $L\stackrel{i'}{\to} P'\stackrel{g'}{\to} M$, the kernel $\ker g'$ is quasi-isomorphic to $TW(f)$ \cite[Section 6.1]{LMDT}. Moreover, every  commutative diagram of morphisms of DG-Lie algebras:
\[\xymatrix{ L \ar[d]  \ar[r]^f & M\ar[d]     \\
 L '\ar[r]^{f'}& M',}\]
induces a morphism of the  homotopy fibres $TW(f) \to TW(f')$. 
\end{remark}

\begin{remark} \label{rem.quasiisoTWcono} 

If $f\colon L\to M$ is an injective morphism of DG-Lie algebras, 
then its cokernel $M/f(L)$ is a DG vector space and the map
\[ 
TW(f)\to M/f(L)[-1]
\]
\[
(x,p(t)m_0+q(t)dt m_1)
\mapsto 
\left(\int_0^1q(t)dt\right) m_1 \pmod{f(L)},
\]
is a surjective quasi-isomorphism.
\end{remark}

\begin{lemma}\label{lem se injec hom fiber  quasi abelian}(Homotopy fibre)
Let $f:L\to M$ be a morphism of DG-Lie algebras. 
\begin{enumerate}
\item If the induced morphism $H^*(f): H^*(L) \to H^*(M)$ is injective,
then the $TW(f)$  is   homotopy abelian.
\item If the induced morphism $H^1(f): H^1(L) \to H^1(M)$ is injective,
then  $\Def_{TW(f)}$  is  unobstructed.
\end{enumerate}

\end{lemma}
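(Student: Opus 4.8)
The plan is to realise $TW(f)$ as the target of a cohomologically surjective morphism out of a homotopy abelian DG-Lie algebra, and then to quote the Transfer Lemma \ref{lemma transfer}(2). The source will be the \emph{based loop space}
\[ \Omega M=\{(0,m(t,dt))\in TW(f)\mid m(0)=m(1)=0\}, \]
the fibre over $0$ of the projection $p\colon TW(f)\to L$, $(x,m)\mapsto x$. First I would check that $\Omega M$ is a DG-Lie ideal of $TW(f)$ with quotient $TW(f)/\Omega M\cong L$, so that the inclusion $\iota\colon\Omega M\hookrightarrow TW(f)$ is a morphism of DG-Lie algebras, and that $\Omega M$ equals $M\otimes\mathcal{I}$, where $\mathcal{I}=\{p(t)+q(t)\,dt\in\K[t,dt]\mid p(0)=p(1)=0\}$ is the differential graded ideal of forms whose polynomial part vanishes at the two endpoints.

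The cohomological bookkeeping comes from Remark \ref{oss TW funtoriale}: writing $TW(f)=\ker g$ for the surjection $g\colon P_f\to M$ of the Factorisation Lemma \ref{lem. factorisation} and using $H^*(P_f)\cong H^*(L)$, the short exact sequence $0\to TW(f)\to P_f\xrightarrow{g}M\to 0$ produces the long exact sequence
\[ \cdots\to H^n(TW(f))\xrightarrow{H^n(p)}H^n(L)\xrightarrow{H^n(f)}H^n(M)\xrightarrow{\delta}H^{n+1}(TW(f))\to\cdots. \]
The key point I would verify here is that, under the shift isomorphism $H^n(\Omega M)\cong H^{n-1}(M)$, the connecting map $\delta$ coincides with $H^*(\iota)$. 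Granting this, exactness gives $\operatorname{im}H^n(p)=\ker H^n(f)$ in each degree, so that injectivity of $H^*(f)$ forces $H^*(p)=0$ and hence makes $\delta$, and therefore $H^*(\iota)$, surjective in all degrees; injectivity of $H^1(f)$ alone makes $\delta$, hence $H^1(\iota)$, surjective in degree $1$.

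The hard part, and the step I expect to be the main obstacle, is to prove that $\Omega M=M\otimes\mathcal{I}$ is homotopy abelian. A direct computation gives $H^*(\mathcal{I})=\K$ concentrated in degree $1$, with class detected by $m\mapsto\int_0^1 q(t)\,dt$; in particular the multiplication on $H^*(\mathcal{I})$ is trivial for degree reasons, and there is no room for nontrivial Massey products. I would then invoke (or prove) the statement that tensoring a DG-Lie algebra with a formal commutative DGA whose cohomology has trivial product yields a homotopy abelian DG-Lie algebra; this is the concrete incarnation of the folklore fact that loop spaces of DG-Lie algebras are homotopy abelian, and it is the one genuinely technical input of the argument. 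Once $\Omega M$ is known to be homotopy abelian, part (1) follows at once: $H^*(\iota)$ is surjective, so Transfer Lemma \ref{lemma transfer}(2) gives that $TW(f)$ is homotopy abelian.

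For part (2) the same inclusion $\iota$ is used, but now only $H^1(\iota)$ is surjective, so the Transfer Lemma does not apply directly and I would not expect full homotopy abelianity. Instead I would argue on deformation functors. Since $\Omega M$ is homotopy abelian, $\Def_{\Omega M}$ is smooth, and surjectivity of $H^1(\iota)$ means that every first order deformation of $TW(f)$ is induced from $\Def_{\Omega M}$ through the strict morphism $\iota$. As $\iota$ issues from an algebra whose (higher) brackets are homotopically trivial, the induced brackets and Massey products on $H^1(TW(f))$ vanish, so the obstruction map of $\Def_{TW(f)}$ is identically zero; hence $\Def_{TW(f)}$ is unobstructed. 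This is precisely the degree $\le 1$ shadow of the computation in (1), which explains why the weaker hypothesis $H^1(f)$ injective already suffices, in the spirit of Remark \ref{remark H^2 inje basta}. The delicate point to nail down in (2) is the vanishing of the higher obstructions, for which I would appeal to the standard smoothness criterion for deformation functors.
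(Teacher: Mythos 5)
Your proposal is correct in substance and shares its skeleton with the paper's proof: both rest on the long exact sequence coming from $0\to TW(f)\to P_f\to M\to 0$ (equivalently, on the sequence $\cdots\to H^{i-1}(M)\to H^i(TW(f))\to H^i(L)\to H^i(M)\to\cdots$), use Transfer Lemma \ref{lemma transfer}(2) for part (1), and invoke the Standard Smoothness Criterion for part (2). The genuine difference is the choice of homotopy abelian source. The paper avoids your ``hard part'' entirely: it maps the shift $M[-1]$, viewed as a \emph{strictly} abelian DG-Lie algebra, directly into $TW(f)$ by $\rho(m)=(0,\,dt\,m)$ --- this is a DG-Lie morphism because $[m\,dt,\,n\,dt]=0$ in $M[t,dt]$ --- and checks surjectivity of $H^*(\rho)$ (resp.\ of $H^1(\rho)$) on the same exact sequence. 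Your route through $\Omega M=M\otimes\mathcal{I}$ needs the extra input that $\Omega M$ is homotopy abelian, and here you should be careful about circularity: ``based loop spaces of DG-Lie algebras are homotopy abelian'' is exactly the case $L=0$ of the lemma you are proving, so it cannot simply be quoted. Fortunately your sketch can be completed in one line, and doing so collapses your argument onto the paper's: $\K\,dt\subset\mathcal{I}$ is a non-unital sub-CDGA with zero differential and zero product, and the inclusion is a quasi-isomorphism since $H^*(\mathcal{I})\cong\K$ is generated by the class of $dt$; hence $M\otimes\K\,dt\cong M[-1]$ (abelian) includes quasi-isomorphically into $\Omega M$, and the composite $M[-1]\to\Omega M\hookrightarrow TW(f)$ is precisely $\rho$. (Your ``no room for Massey products'' heuristic is also salvageable by homotopy transfer --- all higher products on a cohomology that is one-dimensional and concentrated in degree $1$ vanish for degree reasons --- but the explicit sub-CDGA is simpler and self-contained.) Two smaller points: your identification of $H^*(\iota)$ with the connecting homomorphism is correct up to sign (the lift $(0,(1-t)m)$ of a closed $m$ has differential $\mp(0,\,dt\,m)$); and in part (2) the sentence about vanishing of induced brackets and Massey products on $H^1(TW(f))$ is not a proof and should be replaced by what your final sentence already suggests, namely the paper's argument: $\Def_{\Omega M}$ (or $\Def_{M[-1]}$) is smooth, so the zero obstruction theory is complete for it and the morphism is injective on obstructions; combined with surjectivity on $H^1$, the Standard Smoothness Criterion yields smoothness of $\Def_{TW(f)}$.
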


\begin{proof}
 \cite[Proposition 3.4]{algebraicBTT} , \cite[Lemma 2.1]{semireg} or \cite[Corollary 2.8]{FM16}.
As regard (1), consider the DG vector space $M[-1]$ as an abelian DG-Lie algebra and the morphism of DG-Lie algebras
\[
\rho: M[-1] \to TW(f) \qquad \forall \ m \in M \qquad \rho(m)=   (0 , dt m).
\]
According to (2) of  Lemma \ref{lemma transfer}, it is enough to show that $\rho$ induces a surjective map in cohomology but this follows from the exact sequence
 \[\cdots \to H^{i-1}(M)  \to H^{i}(TW(f))\to H^{i}(L) \xrightarrow{H^i(f)} H^{i}(M) \to  \cdots,
\]
since  the morphisms $H^i(f)$ are all injective by hypothesis.

As regard (2), the morphism  of DG-Lie algebras  $\rho: M[-1] \to TW(f)$ induces a morphism of deformation functors $\rho:\Def_{M[-1]} \to \Def_{TW(f)}$, with $\Def_{M[-1]}$ a smooth functor. Then, by the Standard Smoothness Criterion \cite{FM2}, \cite[Theorem 4.11]{ManettiSeattle}, since $\rho$ is injective on obstructions, if $H^1(\rho)$ is surjective then $\Def_{TW(f)}$ is smooth. By the above exact sequence, if $H^1(f): H^1(L) \to H^1(M)$ injective then $H^1(\rho)$ is surjective.

\end{proof}

%
%

\begin{example}

 Let $W$ be a differential graded vector space  $U \subset W$
be a DG subspace. If  the induced  morphism in cohomology
 $H^*(U)\to H^*(W)$ is  injective, then the inclusion  of DG-Lie algebras
\[
f \colon \{f\in \Hom^*_{\K}(W,W) \mid f(U) \subset U\} \to \Hom^*_{\K}(W,W)
\]
satisfies  the hypothesis of Lemma \ref{lem se injec hom fiber  quasi abelian} and so the DG-Lie algebra $TW(f)$ is   homotopy abelian
\cite[Example 3.5]{algebraicBTT} and \cite[Proposition 5.10]{FM16}.
 
The deformation functor associated with the DG-Lie algebra $TW(f)$  has a natural interpretation as the local structure of the derived Grassmannian of $W$ at the point $U$. Therefore, the derived Grassmannian of $W$ is smooth at the points corresponding to subspaces $U$ such that  $H^*(U) \to H^*(W)$ is injective \cite{FM16}.

\end{example}

\section{Cartan homotopies and Main Theorem}\label{Section cartan homoto}

Let $L$ and $M$ be two DG-Lie algebras. A \emph{Cartan homotopy} is a linear map of degree $-1$
\[ \bi \colon L \to M  \]
such that,  for every $a,b\in L$, we have:
\[ \bi_{[a,b]}=[\bi_a,d_M\bi_b] \qquad  \text{and }   \qquad [\bi_a,\bi_{b}] =0.\]
 For every Cartan homotopy $\bi$, it is defined the Lie derivative  map
\[ \bl \colon L\to M,\qquad
\bl_a=d_M\bi_a+\bi_{d_L a}.
\]
It follows from the definition of $\bi$ that  $\bl$ is a morphism of DG-Lie algebras and we can write the conditions   of being a Cartan homotopy as
\[\bi_{[a,b]}=[\bi_a,\bl_b] \qquad \text{and } \qquad [\bi_a,\bi_{b}]=0.\]
Note that, as a morphism of complexes, $\bl$ is homotopic to 0 (with homotopy $\bi$).

\begin{example}\label{exam.cartan su ogni aperto}

Let $X$ be a smooth variety. Denote by $\Theta_X$ the  tangent
sheaf and by $(\Omega^{\ast}_X,d)$ the algebraic de Rham complex.
Then, for every open subset $U \subset X$,  the contraction of a vector   with a differential form
\[ 
\Theta_X(U) \otimes \Omega^k_X(U) \xrightarrow{\quad\contr\quad}
 \Omega^{k-1}_X(U)
\]
induces a linear  map of degree $-1$
\[
\bi \colon  \Theta_X(U) \to  \Hom^*(\Omega^{*}_X(U),
\Omega^{*}_X(U)), \qquad  \bi_{\xi} (\omega) = \xi \contr\omega
\]
that  is a Cartan homotopy. Indeed, the above conditions coincide 
with the classical Cartan's homotopy formulas. 
\end{example}

\begin{example}\label{exam.cartan relativo su ogni aperto}

Let $D$ be a smooth subvariety of codimension 1  of a smooth variety $X$.
Let $(\Omega^{\ast}_X(\log D),d)$  be the logarithmic differential complex and $\Theta_X(-\log  D)$   the logarithmic  tangent sheaf. It is easy to prove explicitly that for every open subset $U\subset X$, we have
\[(\, \Theta_X(-\log  D)(U)\ \contr\ \Omega^k_X(\log D)(U)\,) \subset
\Omega^{k-1}_X(\log D)(U).\]
Then, as above, the induced linear map of degree $-1$
\[
\bi\colon \Theta_X(-\log  D)(U)\to \Hom^*(\Omega^{*}_X(\log  D)(U),
\Omega^{*}_X(\log  D)(U)),\qquad \bi_{\xi}(\omega)=\xi\contr\omega
\]
is a Cartan homotopy.
\end{example}

We are now ready to prove the main theorem.

 \begin{theorem}\label{theorem abstract btt}(Abstract BTT Theorem) Let $L$ and $ M$ be DG-Lie algebras over a field $\K$ of characteristic 0 and $H\subseteq M$ a DG-Lie subalgebra. Assume that there exists a linear map 
 \[ \bi\in \Hom^{-1}_{\K}(L,M),\qquad a\mapsto \bi_a,\]
 of degree $-1$ such that:
\begin{enumerate}
\item $[\bi_a,\bi_b]=0$ and $\bi_{[a,b]}=[\bi_a,d\bi_b]$ for every $a,b\in L$;\smallskip
\item $d \bi_a + \bi_{da} \in H$ for every $a\in L$;\smallskip
\item the inclusion $H \hookrightarrow M$  is injective in cohomology;\smallskip
\item the induced morphism of complexes $\bi\colon L \to (M/H)[-1]$ is injective in cohomology.\smallskip
\end{enumerate}
Then, the DG-Lie algebra $L$ is homotopy abelian.
\end{theorem}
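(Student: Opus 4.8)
The plan is to produce a morphism of DG-Lie algebras from $L$ into a homotopy abelian one that is injective in cohomology, and then to invoke the Transfer Lemma~\ref{lemma transfer}. First I would record the properties of the Lie derivative $\bl_a=d\bi_a+\bi_{da}$. By condition~(1) it is a morphism of DG-Lie algebras; by condition~(2) it takes values in the subalgebra $H$, so it factors as $L\xrightarrow{\bl}H\hookrightarrow M$; and, as already observed in Section~\ref{Section cartan homoto}, $\bl=d\bi+\bi d$ is null-homotopic as a morphism of complexes, whence $H^{*}(\bl\colon L\to M)=0$. Since $H^{*}(H\hookrightarrow M)$ is injective by condition~(3), the factorization forces the corestriction $H^{*}(\bl\colon L\to H)$ to vanish as well. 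These vanishings are the structural input I expect to exploit.

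Next I would fix the target. Write $j\colon H\hookrightarrow M$ for the inclusion. Condition~(3) says $H^{*}(j)$ is injective, so by the Homotopy fibre Lemma~\ref{lem se injec hom fiber quasi abelian}(1) the homotopy fibre $TW(j)$ is homotopy abelian. As $j$ is injective, Remark~\ref{rem.quasiisoTWcono} provides a surjective quasi-isomorphism $TW(j)\to (M/H)[-1]$ onto the abelian DG-Lie algebra $(M/H)[-1]$, sending the class of $(h,\,p(t)m_0+q(t)\,dt\,m_1)$ to $\left(\int_0^1 q\right)m_1 \pmod H$. In particular $H^{*}(TW(j))\cong H^{*-1}(M/H)$, and under this identification the morphism of complexes $\bi\colon L\to (M/H)[-1]$ of condition~(4) is precisely the map to be matched.

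The heart of the argument is to realize $\bi$ by a \emph{genuine} morphism of DG-Lie algebras $\Phi\colon L\to TW(j)$ whose $(M/H)[-1]$-component is $\bi$. The chain map $\bi\colon L\to (M/H)[-1]$ is a morphism of complexes exactly because $d\bi_a+\bi_{da}=\bl_a\in H$, but it is \emph{not} a morphism of DG-Lie algebras: it does not annihilate brackets, since $\bi_{[a,b]}=[\bi_a,\bl_b]$ need not lie in $H$. Moreover the naive linear lift $a\mapsto(\bl_a,\,t\,\bl_a+\bi_a\,dt)$ into $TW(j)$ is a morphism of complexes but fails to be multiplicative, because $[t\,\bl_a,\,t\,\bl_b]=t^{2}\bl_{[a,b]}$ carries a spurious $t^{2}$-term absent from $\bl_{[a,b]}\,t+\bi_{[a,b]}\,dt$. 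Overcoming this is the main obstacle. The Cartan identities of condition~(1), namely $[\bi_a,\bi_b]=0$ and $\bi_{[a,b]}=[\bi_a,\bl_b]$, are exactly the coherence data needed to correct the $M[t,dt]$-coordinate and promote the chain homotopy $\bi$ to strict homotopy-fibre data; concretely I would build $\Phi$ via the functoriality of the homotopy fibre (Remark~\ref{oss TW funtoriale}) from the square comparing $\bl\colon L\to M$ with $j\colon H\to M$, available because $H^{*}(\bl)=0$. This is the step realized in the cited works through an $L_\infty$-morphism, and the whole point of the present formulation is to carry it out without leaving the category of DG-Lie algebras.

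Granting such a $\Phi\colon L\to TW(j)$ with $(M/H)[-1]$-component equal to $\bi$, condition~(4) says $\Phi$ is injective in cohomology, while $TW(j)$ is homotopy abelian; the Transfer Lemma~\ref{lemma transfer}(1) then yields that $L$ is homotopy abelian. I expect the construction and verification of $\Phi$ — equivalently, upgrading the Cartan chain-homotopy to coherent homotopy-fibre data, dissolving the $t^{2}$-obstruction above — to be the only genuinely delicate point, all remaining steps being formal consequences of the homotopy-fibre calculus of Section~\ref{section back ground DGLA}. Finally, if one only wants the smoothness of $\Def_L$, the same scheme should apply with condition~(4) weakened to injectivity of $H^{2}(\bi)$, replacing part~(1) of the Homotopy fibre Lemma~\ref{lem se injec hom fiber quasi abelian} by its obstruction-theoretic part~(2).
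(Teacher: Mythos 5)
You have correctly reconstructed the skeleton of the paper's argument --- the target is the homotopy fibre $TW(\chi)$ of the inclusion $\chi\colon H\hookrightarrow M$, homotopy abelian by Hypothesis~(3) and Lemma~\ref{lem se injec hom fiber quasi abelian}, with $H^{*}(TW(\chi))\cong H^{*-1}(M/H)$ by Remark~\ref{rem.quasiisoTWcono}, and Hypothesis~(4) plus the Transfer Lemma~\ref{lemma transfer} closing the argument --- and you correctly locate the delicate point. But that point is left genuinely open, and the one concrete recipe you propose for it fails. The commutative square with rows $\bl\colon L\to M$ and $\chi\colon H\to M$ (vertical arrows the corestriction $L\to H$ and $\Id_M$) induces, via Remark~\ref{oss TW funtoriale}, a morphism $TW(\bl)\to TW(\chi)$, not a morphism out of $L$; and $TW(\bl)$ is \emph{not} quasi-isomorphic to $L$: precisely because $H^{*}(\bl)=0$, the long exact sequence of the homotopy fibre gives $H^{i}(TW(\bl))\cong H^{i}(L)\oplus H^{i-1}(M)$. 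Extracting your $\Phi$ from this would require a DG-Lie section of the projection $TW(\bl)\to L$, i.e.\ a trivialization of $\bl$ coherent with the brackets --- which is exactly the problem you set out to solve, so the suggestion is circular. Note also that $H^{*}(\bl)=0$ is a purely chain-level statement and cannot supply Lie coherence; indeed your own $t^{2}$ computation suggests that a \emph{strict} DG-Lie morphism $L\to TW(\chi)$ with component $\bi$ should not be expected to exist at all (the cited works produce only an $L_\infty$-morphism).

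The paper's missing ingredient resolves the source instead of strictifying the map. Let $s$ be a formal variable of degree $-1$ with $s^{2}=0$ and $d(s)=1$, so that $\K[s]\otimes L$ is a \emph{contractible} DG-Lie algebra, and define $\varphi\colon \K[s]\otimes L\to M$ by $\varphi(s\otimes a)=\bi_{a}$, which forces $\varphi(1\otimes a)=d\bi_{a}+\bi_{da}=\bl_{a}$. Hypothesis~(1) is precisely the statement that $\varphi$ is a strict morphism of DG-Lie algebras: the relation $s^{2}=0$ mirrors $[\bi_{a},\bi_{b}]=0$, and $[1\otimes a,\,s\otimes b]=\pm\, s\otimes[a,b]$ mirrors $\bi_{[a,b]}=[\bi_{a},d\bi_{b}]$ --- this is where your $t^{2}$-obstruction dissolves, since $s$ squares to zero while $t$ does not. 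Hypothesis~(2) makes the square with rows $\alpha\colon L\to\K[s]\otimes L$, $a\mapsto 1\otimes a$, and $\chi\colon H\to M$, compared by $\psi=\bl\colon L\to H$ and $\varphi$, commute; Remark~\ref{oss TW funtoriale} then yields $\phi\colon TW(\alpha)\to TW(\chi)$, and contractibility of $\K[s]\otimes L$ makes the projection $TW(\alpha)\to L$ a quasi-isomorphism. The zig-zag $L\xleftarrow{\;\sim\;}TW(\alpha)\xrightarrow{\;\phi\;}TW(\chi)$ is the correct substitute for your $\Phi$; with it, the rest of your proposal goes through verbatim, including the weakening of~(4) to injectivity of $H^{2}(\bi)$ for smoothness of $\Def_{L}$, which is the paper's Remark~\ref{remark dopo teorema diagramma semireg}.
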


\begin{proof}
Let $s$ be a formal variable of degree $-1$ and consider the commutative DG-algebra $\K[s]$; note that $s^2=0$ and the  differential $d$  on $\K[s]$ is defined as
\[
d: \K s \to \K, \qquad d(s)=1.
\]
In particular, $\K[s]$ is a contractible DG  algebra  and the inclusion $\K \hookrightarrow \K[s]$ is a morphism of DG  algebras. 

Next, we consider the  DG-Lie algebra $ \K[s]\otimes L$.  For all $s\otimes a \in \K[s]\otimes L$, we have   $deg(s\otimes a) = deg(a) -1$ and
\[
d(s\otimes a)= 1\otimes a- s\otimes da.
\]
Then, we can define  a morphism of DG-Lie algebras by
\[
\varphi: \K[s]\otimes L \to M, \qquad \varphi(s\otimes a)= \bi_a;
\]
in particular,   $\varphi(1\otimes a)= \varphi ( d(s\otimes a )+s\otimes da)= d(  \varphi (s\otimes a))+\bi_{ da}= d \bi_a + \bi_{ da}=\bl_a$, for any $a \in L$,and it  is contained in $H$ by Hypothesis (2).

Thus, we can construct a commutative diagram of morphisms of DG-Lie algebras
\[\xymatrix{ L \ar[d]_\alpha \ar[r]^\psi & H\ar@{^{(}->}^\chi[d]     \\
\K[s]\otimes L \ar[r]^\varphi& M,}\]
where $\alpha(a) = 1\otimes a$ and $\psi(a)= \bl_a$,  for any $a \in L$.

According to Remark \ref{oss TW funtoriale}, this diagram induces a morphisms of DG-Lie algebras
\[
\phi: TW(\alpha) \to TW(\chi).
\]
Hypothesis (3) and Lemma \ref{lem se injec hom fiber  quasi abelian} applied to the inclusion  $\chi: H \to M$ imply  that the DG-Lie algebra  $TW(\chi)$ is homotopy abelian. Moreover, according to Remark \ref{rem.quasiisoTWcono},  $TW(\chi)$  is quasi-isomorphic as a DG vector spaces to $(M/ H)[-1]$.
Since $\K[s]\otimes L $ is contractible, then $TW(\alpha) \to L$ is a quasi-isomorphism. Finally, Hypothesis (4) implies that $\phi$ is injective in cohomology and so Lemma \ref {lem se injec hom fiber  quasi abelian}  implies that $TW(\alpha)$ is homotopy abelian. It follows that    $L$ is also homotopy abelian.

%
%
%
%

\end{proof}

\begin{remark}\label{remark dopo teorema diagramma semireg}
In the above notation and using the first three hypothesis, we have constructed a diagram:
\[\xymatrix{ TW(\alpha) \ar[d] \ar[r]^\phi  & TW(\chi)     \\
 L,}\]
where the vertical map is a quasi-isomorphism. Then, Hypothesis (4) implies that the horizontal map is injective in cohomology.
If we are only interested in the analysis of the obstruction of $\Def_L$, 
then Hypothesis (3) can be relaxed to the condition that only $H^1(\chi)$ is injective and
Hypothesis (4) can be relaxed to the condition that only $H^2(\bi)$ is injective.
Indeed,  $\Def_{TW(\chi)}$ is unobstructed, and so for the vanishing of the obstructions of $\Def_L$ it is enough that $H^2(\phi)$ is injective
(see Section \ref{section further appli} for further generalizations).
\end{remark}

\section{Examples and Applications}\label{sez. example}

In this section, we collect some applications of the main Theorem \ref{theorem abstract btt}.

\subsection{Deformations of compact  manifolds}\label{section def complex manifold}

Let $X$ be a holomorphic compact manifold and denote by $\Theta_X$ its holomorphic tangent bundle. We introduced the Kodaira Spencer DG-Lie algebra $KS_X$ in Example \ref{example KS DGLA}.
Let $(A_X^*,d)=( \bigoplus_{p,q} \Gamma(X,\sA_X^{p,q}), d=\de+\debar) $ be the De Rham complex of $X$, where $\sA_X^{p,q}$ denotes the sheaf of $(p,q)$-differential forms on X and   consider the DG-Lie Algebra  $M= \Hom_\C^*( A_X^*,A_X^*)$.
 
\begin{corollary}\label{cor def X non ostruite}
Let $X$ be a compact manifold with torsion canonical bundle such that the Hodge-de Rham spectral sequence degenerates at $E_1$-level. Then, $KS_X$ is homotopy abelian.
\end{corollary}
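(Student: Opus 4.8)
The plan is to apply the Abstract BTT Theorem \ref{theorem abstract btt} with $L=KS_X$, with $M=\Hom_\C^*(A_X^*,A_X^*)$ (differential $[d,-]$, $d=\partial+\debar$), and with $\bi\colon KS_X\to M$ the contraction $\bi_a(\omega)=a\contr\omega$, which globalizes the Cartan homotopy of Example \ref{exam.cartan su ogni aperto} to the full de Rham complex. For the subalgebra I would take $H=\{f\in M\mid f(F^pA_X^*)\subseteq F^pA_X^* \text{ for all } p\}$, the endomorphisms preserving the Hodge filtration $F^pA_X^*=\bigoplus_{p'\ge p}A_X^{p',*}$; since $\partial$, $\debar$, and any bracket of filtered operators are filtered, $H$ is a DG-Lie subalgebra of $M$. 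Hypothesis (1) is then the statement that contraction is a Cartan homotopy, which I would check from the local Cartan formulas extended to $\Theta_X$-valued forms and the Fr\"olicher--Nijenhuis bracket, with some care for the bidegree and sign bookkeeping.

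For Hypothesis (2) I would compute the Lie derivative $\bl_a=[d,\bi_a]+\bi_{-\debar a}$ and show that it reduces to the holomorphic Lie derivative $\bl_a=[\partial,\bi_a]$. Indeed $\bi_a$ has bidegree $(-1,i)$ for $a\in A_X^{0,i}(\Theta_X)$, and since $\debar$ only differentiates the form coefficients while the contraction is along the holomorphic directions, the local formulas give $[\debar,\bi_a]=\bi_{\debar a}$, so the $(-1,i+1)$-components cancel. The surviving term $[\partial,\bi_a]$ has bidegree $(0,i)$, hence preserves every $F^p$, so $\bl_a\in H$ as required.

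Hypothesis (3) is where the degeneration of the Hodge-de Rham spectral sequence enters: degeneration at $E_1$ is the strictness of the Hodge filtration, and from it I would deduce that $H^*(H)\to H^*(M)\cong \Hom^*_\C(H^*_{dR}(X),H^*_{dR}(X))$ is injective, its image being the endomorphisms preserving the induced filtration $F^\bullet H^*_{dR}(X)$.

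The crux is Hypothesis (4), the injectivity of $\bi\colon KS_X\to(M/H)[-1]$ in cohomology; this is the Hodge-theoretic heart of BTT. Assuming first that $K_X$ is trivial, fix a nowhere-vanishing holomorphic $n$-form $\omega\in A_X^{n,0}$, which is $d$-closed. Evaluation $\mathrm{ev}_\omega\colon M\to A_X^*$, $f\mapsto f(\omega)$, is a morphism of complexes (because $d\omega=0$) sending $H$ into the subcomplex $F^nA_X^*$, hence it induces $M/H\to A_X^*/F^nA_X^*$. The composite $KS_X\xrightarrow{\bi}(M/H)[-1]\to (A_X^*/F^nA_X^*)[-1]$ is $a\mapsto a\contr\omega$ modulo $F^n$; on cohomology $a\mapsto a\contr\omega$ is the isomorphism $H^q(X,\Theta_X)\xrightarrow{\sim}H^{n-1,q}(X)$ induced by $\Theta_X\cong\Omega_X^{n-1}$, and by degeneration $H^{n-1,*}=\mathrm{gr}_F^{n-1}H^*_{dR}(X)$ injects into $H^*_{dR}(X)/F^nH^*_{dR}(X)$. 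Thus the composite, and therefore $\bi$ itself, is injective in cohomology, giving (4). For $K_X$ merely torsion there is no global $\omega$; I would reduce to the trivial case by passing to the cyclic finite \'etale cover $\pi\colon\tilde X\to X$ with $K_{\tilde X}=\pi^*K_X$ trivial and descending the injectivity to the Galois-invariant parts $H^*(KS_X)=H^*(KS_{\tilde X})^G$. The main obstacle is precisely this step (4): producing the volume form (the torsion reduction) and extracting the Hodge piece $H^{n-1,*}$ inside de Rham cohomology via degeneration; once (4) is in hand, Theorem \ref{theorem abstract btt} yields that $KS_X$ is homotopy abelian.
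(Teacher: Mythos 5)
Your proposal is correct and follows the paper's strategy almost step for step: the contraction Cartan homotopy $\bi\colon KS_X\to M=\Hom^*_{\C}(A_X^*,A_X^*)$, a filtration-defined subalgebra $H$ containing every $\bl_a$ (your computation $\bl_a=[\de,\bi_a]$, of bidegree $(0,i)$, is exactly why this holds), degeneration at $E_1$ for Hypothesis (3), evaluation at the holomorphic volume form combined with the isomorphism $\Theta_X\cong\Omega^{n-1}_X$ for Hypothesis (4), and reduction of the torsion case to the trivial case via the cyclic unramified cover, where your Galois-invariants remark $H^*(KS_X)=H^*(KS_{\tilde X})^G$ is precisely the reason the pull-back $KS_X\to KS_{\tilde X}$ is injective in cohomology, so that part (1) of Lemma \ref{lemma transfer} applies.

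The one genuine deviation is the choice of $H$. You take all endomorphisms preserving the full Hodge filtration $F^\bullet A_X^*$, while the paper takes only those preserving the single subcomplex $F^n A_X^*=A_X^{n,*}$. Both choices satisfy Hypothesis (2), and both suffice for Hypothesis (4), since the evaluation map $\mathrm{ev}_\omega$ only uses $H(\omega)\subseteq F^n A_X^*$. But the paper's minimal choice makes Hypothesis (3) a direct instance of the standard lemma (cf.\ the example after Lemma \ref{lem se injec hom fiber quasi abelian}): if $U\subseteq W$ is a subcomplex with $H^*(U)\to H^*(W)$ injective, then $\{f\mid f(U)\subseteq U\}\hookrightarrow \Hom^*_{\K}(W,W)$ is injective in cohomology, applied with $U=F^nA_X^*$, which injects by strictness. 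Your full-flag version of (3) is also true, but it needs an inductive choice of splittings compatible with the entire finite filtration (available exactly because $E_1$-degeneration is equivalent to strictness), a verification your sketch only asserts; the lesson is that the smaller condition on $H$ buys a cheaper Hypothesis (3) at no cost elsewhere. Finally, note a gloss you share with the paper: in the torsion case one implicitly needs Hodge-de Rham degeneration on the cover as well (automatic in the K\"{a}hler setting) in order to run the trivial-canonical-bundle argument there.
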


\begin{proof}
\cite[Corollary 2]{zivran}, \cite[Corollary B]{manetti adv}, \cite[Corally 6.5]{algebraicBTT}.
Let us first consider the case in which the canonical bundle is trivial.
The contraction of vector fields and differential forms together with the cup product defines a Cartan homotopy \cite[Section 6]{ManettiSeattle}
\[\bi\colon KS_X \to M=\Hom^{*}(A_X^*, A_X^*),\quad
\bi_\eta( \omega ) = \eta \contr  \omega, \qquad \forall \ \eta \in KS_X,
\ \  \forall \ \omega\in
A_X.\]

Moreover, for any $ \eta \in KS_X$, $\bl_\eta \in H=\{  \varphi \in M \ \mid \varphi( A_X^{n,*}  )\subset A_X^{n,*}\} $, where $n$ is the dimension of $X$.
By the degeneration property the inclusion $H \hookrightarrow M$  is injective in cohomology.
The  canonical bundle is trivial and so   the cup product with a non trivial section of it, gives the isomorphisms $H^i(\Theta_X)\cong H^i(\Omega^{n-1}_X)$, where $\Omega^{n-1}_X$ denotes the sheaf of holomorphic differential $n-1$ forms.
Then, $H^*(KS_X) \to \Hom^{*}(H^0(\Omega^{n}_X), H^*(\Omega^{n-1}_X) )$ is injective  and this implies that $KS_X \to M/H[-1]$ is also injective in cohomology.
Therefore, the hypothesis of   Theorem \ref{theorem abstract btt} are satisfied.

In the case of torsion canonical bundle,  there exists $m>0$ such that $K_X^{ m}= \Oh_X$, where $K_X$ denote the canonical bundle.
Next, consider the  unramified  $m$-cyclic cover, defined by $K_X$, i.e., 
$\pi: Y= \Spec ( \bigoplus_{i=0}^{m-1} L^{-i}) \to X $ (see \cite{pardini} for full details on abelian covers).
 Then, $\pi :Y \to X$  is a finite flat  map of degree $m$ and  $Y$ is a compact manifold with trivial canonical bundle, since $K_Y \cong \pi^* K_X\cong \Oh_Y$. Therefore, $KS_Y$ is homotopy abelian.
Finally, it is enough to consider the  morphism of DG-Lie algebras $KS_X \to KS_Y$ induced by pull back.
This morphism  is injective in cohomology  and so (1) of Lemma \ref{lemma transfer} implies that $KS_X$ is also homotopy abelian.

\end{proof}

\begin{remark}
 The degeneration hypothesis is satisfied if the $\de\debar$-Lemma holds, for instance for  K\"{a}hler  manifolds.

\end{remark}
 
 \begin{remark}
It is well known that the  Kodaira Spencer DG-Lie algebra $KS_X$ controls the infinitesimal deformations of $X$. Then,  
the infinitesimal deformations of a compact K\"{a}hler manifold with torsion canonical bundle $X$ are unobstructed. This is the classical Bogomolov-Tian-Todorov Theorem.\end{remark}

\begin{remark} 

Let $X$  be a smooth projective variety   over an algebraically closed field $\K$ of characteristic 0. Then, the analogous of Corollary \ref{cor def X non ostruite} holds.

 In this case, we can replace the Kodaira-Spencer DG-Lie algebra with 
the DG-Lie algebra  $\Tot(\Theta_X(\mathcal{U}))$,  obtained  applying the Thom-Withney totalization to the semicosimplicial DG-Lie algebras
$\Theta_X(\mathcal{U})$, for any affine open cover $\mathcal{U}$ of $X$ \cite[Theorem 5.3]{algebraicBTT}.
Also in this case,  if the canonical bundle of $X$ is torsion,
then   $\Tot(\Theta_X(\mathcal{U}))$  is homotopy-abelian and so   $X$ has unobstructed deformations \cite[Theorem 6.2 and Corollary 6.5]{algebraicBTT}.
\end{remark}

\subsection{Deformations of pairs (divisor, manifold)}

Let $D$ be  a smooth divisor in a compact manifold  $X$.   We introduced the Kodaira Spencer DG-Lie algebra  of the pair $KS_{(X,D)}$ in Example \ref{example DGLA pair}. Then, considering  ${\Theta}_X(-\log D)$ instead of   $\Theta_X$, we can proceed as in the case of  $KS_X$ of Corollary \ref{cor def X non ostruite}.

\begin{corollary}
Let $D$ be  a smooth divisor in a compact manifold  $X$, such that the logarithmic canonical bundle ${\Omega}^n_X(\log D)$ is trivial and the logarithmic  Hodge-de Rham spectral sequence degenerates at $E_1$-level. Then, $KS_{(X,D)}$ is homotopy abelian.
\end{corollary}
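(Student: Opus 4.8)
The plan is to follow the proof of Corollary~\ref{cor def X non ostruite} line by line, replacing the ordinary tangent sheaf, cotangent sheaf, and de Rham complex by their logarithmic counterparts. Set $A^{p,q}_X(\log D)=\Gamma(X,\sA^{0,q}_X(\Omega^p_X(\log D)))$ and let $A^*_X(\log D)=\bigoplus_{p,q}A^{p,q}_X(\log D)$ with differential $d=\partial+\debar$; since $d$ preserves logarithmic forms this is a well-defined complex, and $\debar$ resolves the logarithmic de Rham complex. Consider the DG-Lie algebra $M=\Hom^*_\C(A^*_X(\log D),A^*_X(\log D))$. By Example~\ref{exam.cartan relativo su ogni aperto} the contraction of a logarithmic vector field with a logarithmic form is again logarithmic, so
\[
\bi\colon KS_{(X,D)}\to M,\qquad \bi_\eta(\omega)=\eta\contr\omega,
\]
is well defined and is a Cartan homotopy; this gives Hypothesis~(1) of Theorem~\ref{theorem abstract btt}.

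Next I would set $H=\{\varphi\in M\mid \varphi(A^{n,*}_X(\log D))\subset A^{n,*}_X(\log D)\}$ with $n=\dim X$, the stabiliser of the bottom step of the logarithmic Hodge filtration. The Lie derivative $\bl_\eta=d\bi_\eta+\bi_{d\eta}$ preserves the holomorphic degree: the Leibniz rule $\debar(\eta\contr\omega)=(\debar\eta)\contr\omega\pm\eta\contr(\debar\omega)$, valid because $\eta$ takes values in $\Theta_X(-\log D)$ and contraction lowers only the holomorphic degree, forces the degree-lowering part of $\bl_\eta$ to cancel. Hence $\bl_\eta(A^{n,*}_X(\log D))\subset A^{n,*}_X(\log D)$, i.e. $\bl_\eta\in H$, which is Hypothesis~(2). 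The assumed degeneration at $E_1$ of the logarithmic Hodge--de Rham spectral sequence then gives, exactly as in the compact case, that $H\hookrightarrow M$ is injective in cohomology, which is Hypothesis~(3).

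For Hypothesis~(4) I would use that $\Omega^n_X(\log D)$ is trivial. Contraction with a nowhere-vanishing global logarithmic $n$-form induces isomorphisms $H^i(\Theta_X(-\log D))\cong H^i(\Omega^{n-1}_X(\log D))$, while the K\"unneth formula identifies $H^*(M)$ with $\Hom^*(H^*(A^*_X(\log D)),H^*(A^*_X(\log D)))$ carrying the logarithmic Hodge pieces. Combining these shows the induced map $H^*(KS_{(X,D)})\to\Hom^*(H^0(\Omega^n_X(\log D)),H^*(\Omega^{n-1}_X(\log D)))$ is injective, hence $\bi\colon KS_{(X,D)}\to(M/H)[-1]$ is injective in cohomology, which is Hypothesis~(4). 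All four hypotheses hold, so Theorem~\ref{theorem abstract btt} yields that $KS_{(X,D)}$ is homotopy abelian. Since $\Omega^n_X(\log D)$ is assumed trivial rather than merely torsion, no cyclic-cover reduction (as in the torsion case of Corollary~\ref{cor def X non ostruite}) is required.

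The routine parts are Hypotheses~(1) and~(2), which reduce to the local compatibility of contraction with the logarithmic structure and to the Leibniz rule above. The main obstacle is to transport the Hodge-theoretic inputs behind Hypotheses~(3) and~(4) to the logarithmic setting: one must know that the logarithmic Hodge--de Rham degeneration makes the Hodge filtration on $H^*(X\setminus D)$ strict enough that $H\hookrightarrow M$ stays injective in cohomology, and that triviality of $\Omega^n_X(\log D)$ provides the logarithmic analogue of the Serre-duality isomorphism $H^i(\Theta_X(-\log D))\cong H^i(\Omega^{n-1}_X(\log D))$. Once these logarithmic analogues of the facts used in Corollary~\ref{cor def X non ostruite} are established, the argument is formally identical.
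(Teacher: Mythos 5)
Your proposal is correct and follows essentially the same route as the paper: the same logarithmic contraction Cartan homotopy into $M=\Hom_\C^{*}(\sA^{*,*}_X(\log D),\sA^{*,*}_X(\log D))$, the same subalgebra $H$ stabilising $\sA_X^{n,*}(\log D)$, the degeneration hypothesis for injectivity of $H\hookrightarrow M$ in cohomology, and pairing with a trivialising section of $\Omega^n_X(\log D)$ to get $H^i(\Theta_X(-\log D))\cong H^i(\Omega^{n-1}_X(\log D))$ and hence Hypothesis (4) of Theorem \ref{theorem abstract btt}. The paper's proof is precisely this transposition of Corollary \ref{cor def X non ostruite} to the logarithmic setting, with no cyclic-cover step since triviality (not mere torsion) is assumed.
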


\begin{proof} 
\cite[Theorem 5.1 and Corollary 5.4]{donacoppie}. 
The proof is analogous to the one of Corollary \ref{cor def X non ostruite}.
Here, the Cartan homotopy is given by the   contraction of  logarithmic tangent vector and  logarithmic differentials:
\[\bi\colon KS_{(X,D)} \to M=\Hom_\C^{*}(\sA^{*,*}_X(\log D),\sA^{*,*}_X (\log D)).\]

Then, for any $ \eta \in KS_{(X,D)}$, $\bl_\eta \in H=\{  \varphi \in M \ \mid \varphi( \sA_X^{n,*}(\log D)  )\subset \sA_X^{n,*}(\log D)\} $, where $n$ is the dimension of $X$.

Next, the degeneration property implies that the inclusion $H \hookrightarrow M$  is injective in cohomology.
Since the logarithmic canonical bundle ${\Omega}^n_X(\log D)$ is trivial, 
 the cup product with a non trivial section of it, gives the isomorphisms $H^i(\Theta_X (\log D))\cong H^i(\Omega^{n-1}_X (\log D))$.
 
This implies that $H^*(KS_{(X,D)}) \to \Hom^{*}(H^0(\Omega^{n}_X(\log D)), H^*(\Omega^{n-1}_X(\log D)) )$ is injective  and so $KS_{(X,D)} \to M/H[-1]$ is also injective in cohomology.
Therefore, the hypothesis of   Theorem \ref{theorem abstract btt} are satisfied.

\end{proof}
 
\begin{remark}
 The degeneration hypothesis is satisfied for any globally  normal crossing divisor $D$ in a compact  K\"{a}hler  manifold  \cite[Theorem 8.35]{voisin}.
\end{remark}
 
 \begin{remark}
It can be also proved that $KS_{(X,D)}$ is homotopy abelian for a smooth divisor $D$ in a Calabi-Yau manifold $X$ \cite[Theorem 4.7]{donacoppie}.
In this case, the relevant spectral sequence  is the one associated with the Hodge filtration
\[
E_{1}^{p,q}=H^q(X,\Omega^{p}_X(\log D) \otimes \Oh_X(-D))  \Longrightarrow 
\mathbb{H}^{p+q}( X,\Omega^{\ast}_X(\log D)  \otimes \Oh_X(-D))
\]
and it degenerates at the $E_1$-level \cite[Section 4.3]{fujino1}.
 \end{remark}

 \begin{remark}
It is well known that the  Kodaira Spencer DG-Lie algebra $KS_{(X,D)}$ controls the  infinitesimal deformations of  the $(X,D)$. Then,  the infinitesimal deformations of the pair $(X,D)$ are unobstructed when $D$ is a smooth  divisor in a compact  K\"{a}hler  manifold $X$ such that ${\Omega}^n_X(\log D)$ is trivial  \cite[Corollary 4.5]{donacoppie} and when $D$ is  a smooth  divisor in a compact Calabi-Yau manifold $X$ \cite[Corollary 4.8]{donacoppie}.

  \end{remark}

\begin{remark} 

In general, if the ground field is an algebraically closed field
of characteristic 0,    we can replace the  DG-Lie algebra $KS_{(X,D)}$ with 
the DG-Lie algebra  $TW({\Theta}_X(-\log D)(\mathcal{U}))$ obtained  applying the Thom-Withney realisation to the semicosimplicial DG-Lie algebras
${\Theta}_X(-\log D)(\mathcal{U})$, for any affine open cover $\mathcal{U}$ of $X$ \cite[Theorem 4.3]{donacoppie}.
Also in this case,   $TW({\Theta}_X(-\log D)(\mathcal{U}))$   is homotopy abelian, for a smooth divisor $D$ in a smooth projective variety $X$ such that  ${\Omega}^n_X(\log D)$ is trivial  and when $D$ is  a smooth  divisor in a smooth projective  Calabi-Yau  variety.

\end{remark}

If $D$ is not smooth but only a simple normal crossing divisor, then $KS_{(X,D)}$ (or $TW({\Theta}_X(-\log D)(\mathcal{U}))$)    controls the locally trivial infinitesimal deformations of  the pair  $(X,D)$. Then, the computations above shows that the locally trivial infinitesimal deformations are unobstructed.

\subsection{Differential Batalin-Vilkovisky algebras}
The main references for this example are \cite{terilla,KKP} and \cite[Section 7]{donacoppie}.

\begin{definition}\label{def dbv} 
Let $k$ be a fixed odd integer.
A \emph{differential Batalin-Vilkovisky algebra} (dBV for short) of degree $k$ over $\K$ is the data
$(A, d, \Delta)$, where $(A,d)$ is a differential $\Z$-graded  commutative algebra with unit $1\in A$, 
and $\Delta$ is an operator of degree $-k$, such that $\Delta^2=0$, $\Delta (1)=0$ and 
\begin{multline*}
\Delta(abc)+\Delta(a)bc+(-1)^{\bar{a}\;\bar{b}} \Delta(b) a c+(-1)^{\bar{c}(\bar{a}+\bar{b})}
\Delta(c)ab=\\
=\Delta(ab)c +(-1)^{\bar{a} (\bar{b}+\bar{c})} \Delta(bc)a+(-1)^{\bar{b}\bar{c}}\Delta(ac)b.
\end{multline*}
\end{definition}
For any  graded dBV algebra
$(A, d, \Delta)$ of degree $k$, it is canonically defined a  DG-Lie algebra
$(L,d,[-,-])$, where: 
$L=A[k]$, $d_{L}=-d_A$ and,
\[
[a,b]= (-1)^{p}(\Delta(ab)-\Delta(a)b)-a\Delta(b),\qquad \forall \  a\in A^p.
\]
\smallskip

\begin{definition}
A dBV algebra $(A,d,\Delta)$  of degree $k$ has the \emph{degeneration property} if  for every $a_0 \in A$,
such that $d a_0=0$, there exists a sequence $a_i$, $i\geq 0$, with $\deg(a_i)=\deg(a_{i-1})-k-1$ and 
such that   
\[
 \Delta a_i= da_{i+1}, \qquad i\geq 0.
\] 
 \end{definition}

\begin{example}\label{examp 1 k complex}
 Let $(V,d,\Delta)$ be a $(1,k)$-bicomplex, where $k$ is an odd integer, i.e., $(V,d)$ is a DG vector space and  $\Delta\in \Hom^k_{\K}(V,V)$  such that  
\[  \Delta^2=0 \quad [d,\Delta]=d\Delta+\Delta d=0.\]
If the $d\Delta$-lemma holds, i.e.,
\[ 
\ker  d\cap \Delta(V)=\ker \Delta \cap d(V) = d \Delta (V),
\]
then $(V,d,\Delta)$ has the degeneration property. 
Indeed,  if $da_0=0$ we have $\Delta a_0\in d\Delta (V)$ and then there exists $b\in V$ such that 
$d\Delta(b)=\Delta a_0$. It is sufficient to take $a_1=\Delta(b)$ and $a_i=0$ for every $i\ge 2$. Note that the converse is not true in general. For instance, if $d=\Delta$, then  $(V,d,\Delta)$ has the degeneration property, while $\ker \Delta \cap d(V)=d \Delta (V)$ if and only if  $d= \Delta=0$. 
\end{example}

\begin{example} \label{example complex degeneration}
Let $(V,d,\Delta)$ be a $(1,k)$-bicomplex as in Example \ref{examp 1 k complex} and suppose the existence of an operator $f \in \Hom^{k-1}_{\K}(V,V)$ such that 
\[ \Delta=[d,f],\qquad [f,\Delta]=0.\]
Then $(V,d,\Delta)$ has the degeneration property.
Indeed, consider  a formal parameter $t$  and the associative graded algebra $\Hom^{*}_{\K}(V,V)[[t]]$. Then, we have
\[ e^{tf}de^{-tf}=e^{[tf,-]}d=d+t[f,d]=d-t\Delta\]
and therefore
\[ t\Delta e^{tf}=-e^{tf}d+de^{tf}=[d,e^{tf}].\]
Let $a\in V$ be such that $da=0$ and define the sequence $a_i$ by the rule
\[\sum_{i\ge 0}a_it^i=e^{tf}(a).\]
It implies $a_0=a$ and 
\[ \sum_{i\ge 0}t^{i+1}\Delta a_i=t\Delta e^{tf}a=de^{tf}a=
\sum_{i\ge 0}t^{i}da_i\]
and then $da_{i+1}=\Delta a_i$ for every $i$. 
\end{example}

 \begin{theorem}\label{theorem dbv degener implies homotopy abelian}
Let  $(A,d,\Delta)$ be  a dBV algebra with the degeneration property. Then, the associated DG-Lie algebra $L=A[k]
$ is homotopy abelian.
\end{theorem}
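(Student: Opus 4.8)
The plan is to realise a Cartan calculus $(\bi,H)$ and invoke the Abstract BTT Theorem \ref{theorem abstract btt}. The algebraic input is the behaviour of the multiplication operators $\ell_a\in\Hom^*_\K(A,A)$, $\ell_a(x)=ax$. Because $A$ is graded-commutative these commute, $[\ell_a,\ell_b]=0$, and the seven-term relation of Definition \ref{def dbv} is exactly the statement that $\Delta$ is a second-order operator with $\Delta(1)=0$; equivalently, the double commutator $[[\Delta,\ell_a],\ell_b]$ is again a multiplication operator, and evaluating at the unit identifies it (up to sign) with $\ell_{[a,b]}$, the bracket being the one defining $L$. Hence, formally, $\ell$ is a Cartan homotopy as soon as the differential on the target is $[\Delta,-]$. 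Two obstructions prevent us from using $\Hom^*_\K(A,A)$ directly: a Cartan homotopy must have degree $-1$, whereas $\ell_a$ has the degree of $a$; and $[\Delta,-]$ has degree $-k\neq 1$, so it is not the differential of a DG-Lie algebra.

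I would remove both obstructions at once with a formal parameter $u$ of even degree $k+1$. Consider the DG vector space $V=A((u))$ over $\K$ with the degree-one differential $D=d-u\Delta$; one has $D^2=-u(d\Delta+\Delta d)=0$, since $\Delta^2=0$ and (as in any differential BV algebra) $d$ and $\Delta$ graded-commute. Put $M=\Hom^*_\K(V,V)$ with differential $[D,-]$, and define $\bi\colon A[k]\to M$ by $\bi_a=u^{-1}\ell_a$; the factor $u^{-1}$, of degree $-(k+1)$, makes $\bi$ homogeneous of degree $-1$. Since $u^{-1}$ is central and $D$-closed, $d_M\bi_a=u^{-1}[D,\ell_a]=u^{-1}\ell_{da}-[\Delta,\ell_a]$. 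Using $[\ell_a,\ell_b]=0$ and the second-order identity above one checks the two conditions of hypothesis (1), namely $[\bi_a,\bi_b]=0$ and $\bi_{[a,b]}=[\bi_a,d_M\bi_b]$, while the Lie derivative is $\bl_a=d_M\bi_a+\bi_{d_La}=-[\Delta,\ell_a]$. For $H$ I would take the DG-Lie subalgebra of operators preserving the $u$-adic filtration $F^p=u^pA[[u]]$ of $V$. As $\ell_a$ and $\Delta$ are $u$-linear, $\bl_a=-[\Delta,\ell_a]$ preserves $F^\bullet$, which is hypothesis (2).

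There remain the two cohomological hypotheses. The degeneration property says precisely that every $d$-cocycle $a_0$ is the constant term of a $D$-cocycle $\sum_{i\ge 0}a_iu^i$, that is, that the spectral sequence of $F^\bullet$ on $(V,D)$ degenerates and $H^*(V,D)\cong H^*(A,d)((u))$ with the $u$-adic filtration strict. Exactly as degeneration of the Hodge--de Rham sequence is used in Corollary \ref{cor def X non ostruite}, this strictness forces the filtration-preserving operators to inject in cohomology, $H^*(H)\hookrightarrow H^*(M)$, giving hypothesis (3). For hypothesis (4), the Künneth isomorphism $H^*(M)\cong\Hom^*_\K(H^*(V,D),H^*(V,D))$ shows that on cohomology the leading term of $\bi$ sends a class $[a]$ to the operator $u^{-1}\ell_{[a]}$ of multiplication by $[a]$; evaluating this operator at the unit $[1]$ returns $[a]$, so the map is injective and therefore $\bi\colon A[k]\to (M/H)[-1]$ is injective in cohomology. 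Here the unit $1\in A$ plays the part of the trivialising section of the canonical bundle in the classical theorem. Theorem \ref{theorem abstract btt} then yields that $L=A[k]$ is homotopy abelian.

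The step I expect to be the real obstacle is hypothesis (3): converting the degeneration property into the cohomological injectivity of $H\hookrightarrow M$. This is the precise analogue of deducing injectivity of the filtration-preserving endomorphisms from degeneration of the Hodge--de Rham spectral sequence, and it is the only place where the hypothesis is genuinely used; the remaining ingredients are the formal second-order identity of a BV algebra and, for hypothesis (4), the mere existence of the unit. The sign bookkeeping in hypothesis (1) and the verification that $M$ is computed by the Künneth formula over $\K$ (using that complexes over a field split) are routine.
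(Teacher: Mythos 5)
Your construction coincides with the paper's: the paper also takes $V=A((t))$ with differential $D=d-t\Delta$ for a formal variable $t$ of degree $k+1$, the Cartan homotopy $\bi_a=t^{-1}\ell_a$ (so $\bl_a=-[\Delta,\ell_a]$), a stabilizer subalgebra $H\subset M=\Hom^*_\K(V,V)$, and the degeneration property to verify the cohomological hypotheses. But your verification of hypothesis (4) has a genuine gap. The element $\bi_a$ is \emph{not} $[D,-]$-closed in $M$ even when $da=0$: its differential is $\bl_a=-[\Delta,\ell_a]\in H$, which is nonzero in general. So $\bi_a$ has no class in $H^*(M)$, the K\"unneth computation ``$\bi$ sends $[a]$ to $u^{-1}\ell_{[a]}$ in $H^*(M)\cong\Hom^*_\K(H^*(V),H^*(V))$'' is ill-posed, and in any case injectivity into $H^*(M)$ would not give what hypothesis (4) asks, namely injectivity into $H^*((M/H)[-1])$ --- the projection $M\to M/H$ kills precisely the $H$-classes. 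The correct argument (the paper's) is: if $[\bi_a]=0$ in $H^*(M/H)$, then $\bi_a=[D,g]+\eta$ with $\eta\in H$; evaluating at $1$ (legitimate, since $D1=0$ makes $\mathrm{ev}_1$ a chain map carrying $H$ into $A[[t]]$) gives $a/t\in D(V)+A[[t]]$, and one must then show this forces $[a]=0$ in $H^*(A,d)$. That is the injectivity in cohomology of $\beta\colon F^{-1}/F^0\to A((t))/F^0$, which uses the degeneration property a \emph{second} time (injectivity of both $H^*(F^0)\to H^*(V)$ and $H^*(F^{-1})\to H^*(V)$, plus a chase on the two short exact sequences). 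Your closing claim that degeneration enters only in hypothesis (3) while (4) needs ``the mere existence of the unit'' is therefore false: without degeneration, the relations $a=dx_1-\Delta x_2$, $dx_j=\Delta x_{j+1}$ coming from $a/t\in D(V)+A[[t]]$ do not force $a$ to be $d$-exact.

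A secondary, repairable issue is your choice of $H$ as the preservers of the whole filtration $F^\bullet$. The paper takes the larger $H=\{\varphi\in M \mid \varphi(A[[t]])\subset A[[t]]\}$, for which hypothesis (3) is exactly the single-subcomplex lemma already quoted in the paper (if $H^*(U)\to H^*(W)$ is injective, the stabilizer of $U$ injects in cohomology into $\Hom^*_\K(W,W)$). For your full-flag $H$ that lemma does not apply verbatim; one would need a filtration-compatible contraction (obtainable by homological perturbation using $t$-linearity of $D$), which you rightly flag as the obstacle but do not supply. Since $\bl_a=-[\Delta,\ell_a]$ is $t$-linear, it preserves $F^0$ in particular, so simply enlarging $H$ to the paper's choice removes this difficulty --- note that passing to a larger $H$ only makes hypothesis (4) harder, not easier, so this enlargement together with the evaluation argument above is what actually closes the proof.
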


\begin{proof}  Here we only give a sketch of the proof. The original proof  can be found in \cite[Theorem 1]{terilla} or \cite[Theorem 4.14]{KKP} for $k=1$ and  in  \cite[Theorem 7.6]{donacoppie} for any odd $k$. 

Given a dBV algebra  $(A, d, \Delta)$   and 
 $t$ a formal central variable of (even) degree $1+k$, we can define the graded vector space $A[[t]]$  of formal power  series with coefficients in $A$ and the graded vector space  $A((t))=\bigcup_{p\in\Z} t^pA[[t]]$   of formal Laurent power series.
Note that    $d(t)=\Delta(t)=0$    and    $d-t\Delta$ is a  well-defined differential   on $A((t))$.

Let $F^\bullet$ be the filtration  on the complex $(A((t)), d-t\Delta)$  defined by
$F^p=t^pA[[t]]$, for every $p \in \Z$.  Note that $A((t))=\bigcup _{p\in \Z} F^p$ and $F^0=A[[t]]$ and 
the map $a\mapsto \dfrac{a}{t}$ induces  an isomorphism of DG-vector spaces $A\to F^{-1}/F^0$.

The degeneration property   implies that the inclusion $F^p \to A((t))$ is injective in cohomology, for every $p$, and, in particular,  
$F^0=A[[t]]\to A((t))$ is   injective in cohomology.

Consider $ M=\Hom^*_{\K}(A((t)),A((t)))$ and  $H=\{  \varphi \in M \ \mid \varphi( A[[t]])\subset A[[t]]\} $; then, the degeneration property implies that   $H \hookrightarrow M$ is  injective in cohomology (Hypotheis (3) of  Theorem \ref{theorem abstract btt}).

A tedious but straightforward computation shows that 
the map 
 \[\bi\colon L \to M=\Hom^*_{\K}(A((t)),A((t))),\qquad
a\longmapsto \bi_a(b)=\frac{1}{t}ab \]
is a Cartan homotopy \cite[Lemma 7.2]{donacoppie} and,  for any $a\in L$, $\bl_a \in H$, i.e.,  Hypotheis (1) and (2) of Theorem \ref{theorem abstract btt} are satisfied. Therefore, we only need to show that the morphism of DG-vector spaces $\bi: L \to M/H[-1]$  is  injective in cohomology  and this follows again by degeneration property.
Indeed, $M/H[-1]= \Hom^*_{\K}\left(A[[t]], \frac{A((t))}{A[[t]]}\right)[-1]$ and so we need  to prove that:
\[\bi\colon A\to \Hom^*_{\K}\left(A[[t]], \frac{A((t))}{A[[t]]}\right)[-k-1]\]
is injective in cohomology. 
It suffices to show  the injectivity for the composition  with the  evaluation at $1\in A[[t]]$, i.e.,    
 the injectivity in cohomology of the morphism 
\[
 A\to \frac{A((t))}{A[[t]]},\qquad a\mapsto \frac{a}{t},
\]
 or equivalently of the morphism
\[
\beta: \frac{F^{-1}}{F^0} \to \frac{A((t))}{F^0}. 
\]
Consider the diagram 
\[ \xymatrix{ 0 \ar[r] &  F^0\ar[r] \ar@{= }[d]& F^{-1}  \ar[d]_{ \alpha }\ar[r] &  \dfrac{F^{-1}} {F^0}  \ar[d]^\beta \ar[r] &0\\
0\ar[r]& F^0\ar[r] & A((t)) \ar[r ]&  \dfrac {A((t))} {F^0}   \ar[r] & 0; \\ }
\]
the degeneration property implies that $\alpha$ is injective in cohomology and so $\beta$ is also injective in cohomology.
 
\end{proof}

\begin{remark}

In \cite{bandieraNA}, the author uses similar techniques to extend this result to the context of commutative $BV_\infty$-algebras. In this case, the degeneration property implies that the associated $L_\infty[1]$-algebras is homotopy abelian \cite[Theorem 6.16]{bandieraNA}, see also \cite{BraunLazarev}.
\end{remark}

\begin{example}\cite[Section 5]{FiMaformal}  and \cite[Section 6]{Rugg}. 
Let $X$ be a complex manifold and $\Theta_X$ it tangent bundle. An holomorphic Poisson bi-vector on $X$ is an element $\pi \in H^0(X, \wedge^2 \Theta_X)$, such that $[\pi,\pi]_{SN}=0$, where $[\, , \ ]_{SN}$ denotes the Schouten-Nijenhuis bracket of polyvectorfields \cite{LMDT}. In this case, we say that the pair $(X, \pi)$ is a holomorphic Poisson manifold. 
Then, let $(A_X^*,d)$ be the de Rham complex, where $A_X^i= \bigoplus_{p+q=i} A^{p,q}_X $ and consider the map
\[\bi_{\pi} \in \Hom^{-2}(A_X^*,A_X^*)\qquad \bi_{\pi}(\alpha)=\pi\contr \alpha.\]   
Note that $[\bi_{\pi},\bi_{\pi}]=0$ and,  setting $\Delta=[d,\bi_{\pi}]$, we have 
\[ [\bi_{\pi},\Delta]=  [ \bi_{\pi},[d,\bi_{\pi}]]=\bi_{[\pi,\pi]_{SN}}=0.\]
According to Example \ref {example complex degeneration}, $(A_X^* ,d, \Delta)$ has the degeneration property and so Theorem \ref{theorem dbv degener implies homotopy abelian} implies that the associated DG-Lie algebra $(A_X^*[1],-d, [ , ]_\pi)$  is homotopy abelian (see \cite[Theorem 5.3]{FiMaformal} and \cite[Section 6]{Rugg}).
\end{example}

\subsection{The splitting property}
Let $(L, d , [\ , \ ])$ be a DG-Lie algebra. Denote by $L[1]$  the shifted graded vector space: $L[1]^i=L^{i+1}$ and by $L[1]^{\odot n}$ its $n$-th symmetric power.  Note that $1 \in L[1]^{\odot 0}=\K$. Then, we can consider the associated differential graded cocommutative coalgebra $(SL[1], \Delta, Q)$, where $\displaystyle SL[1]= \bigoplus_{n \geq 0} L[1]^{\odot n}$, $\Delta$ is the usual coproduct  and $Q$ the coderivation associated with $d$ and $ [\ , \ ]$.

More explicitly, let   $q_1(x)=-d(x)$  and $q_2(x \odot y)= -(-1)^i[x,y] $ for any $x \in L[1]^i$ and $y \in L[1]$, then 
$$
Q(v_1 \odot \ldots \odot  v_n)=\sum_{k=1}^2 \sum_{\sigma \in
S(k,n-k)} \epsilon(\sigma)q_k( v_{\sigma(1)} \odot \ldots \odot
v_{\sigma(k)})\odot v_{\sigma(k+1)} \odot \ldots \odot
v_{\sigma(n)},
$$
where  $S(p,q)$ denotes the set of unshuffles of type $(p,q)$ and 
$ \epsilon(\sigma)$ is the Koszul sign. It turns out that $Q^2=0$.

 Note that, any morphism $F:SL[1] \to SL[1]$ is uniquely determined by the corestriction 
 $pF=(f_0, f_1, f_2, \ldots): SL[1] \to SL[1]  \to L[1]$, where  $p: SL[1] \to L[1]^{\odot 1 }=L[1]$ is the natural projection and 
$f_i: L[1]^{\odot i } \to L[1]$ are the components of  $pF$.

Next, denote by $(\coder^*(SL[1]),[Q, \ ], [ \ , \ ])$ the DG-Lie algebra of coderivation of  $SL[1]$ and consider the surjective morphism of DG-vector spaces
\[
 \coder^*(SL[1]) \stackrel{b}{\to} L[1] , \qquad b(\alpha)= p\alpha (1), \quad \forall \alpha.
\]

\begin{corollary}[\cite{bandieraRendiconti}]\label{coroll L homotopy in coder}
In the above assumption, if $b\colon\coder^*(SL[1]) \to L[1]$ induces a surjective morphism in cohomology, then $L$ is homotopy abelian.

\end{corollary}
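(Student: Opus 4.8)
The plan is to apply the Abstract BTT Theorem \ref{theorem abstract btt} with a carefully chosen triple $(L', M, H)$ and Cartan homotopy, where the roles are played by the coderivation DG-Lie algebra. The natural candidate for the ``big'' DG-Lie algebra is $M=\coder^*(SL[1])$ itself, and the idea is to exhibit $L$ as homotopy abelian by realising it inside a Cartan calculus whose data are built from the comultiplication structure. First I would recall that $\coder^*(SL[1])$ carries the convolution-type multiplication and that the bracket $[Q,-]$ is the internal differential; the map $b$ that reads off the linear component $p\alpha(1)$ is then a morphism of DG-vector spaces which, by the hypothesis, is surjective in cohomology. The strategy is to convert this surjectivity statement, via the Transfer Lemma \ref{lemma transfer}(2), into homotopy abelianity of $L[1]$ regarded as an abelian object, once we have produced an abelian DG-Lie algebra mapping onto it in cohomology.

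Concretely, I would first construct a Cartan homotopy $\bi$ on $\coder^*(SL[1])$ whose Lie derivative $\bl$ lands in a suitable subalgebra $H$, so that conditions \eqref{item.ch} and \eqref{item.cc} of Theorem \ref{theorem abstract btt} hold. The cleanest source for such a map is the coalgebra structure: multiplication by a fixed element, or insertion into the cofree coalgebra, produces degree $-1$ operators satisfying the Cartan formulas $\bi_{[a,b]}=[\bi_a,d\bi_b]$ and $[\bi_a,\bi_b]=0$ automatically, because these identities are formal consequences of the associativity and cocommutativity already encoded in $SL[1]$. I would take $H$ to be the subalgebra of coderivations preserving the filtration by symmetric degree (the coderivations killing the coaugmentation or preserving $\bigoplus_{n\ge 1}L[1]^{\odot n}$), so that $\bl_a\in H$ holds by degree reasons.

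The main obstacle, and the step requiring genuine work, is verifying the two cohomological injectivity conditions \eqref{item.ch}(3) and \eqref{item.ch}(4) purely from the hypothesis that $b$ is surjective in cohomology. The inclusion $H\hookrightarrow M$ being injective in cohomology should follow from the fact that the associated graded of the symmetric-degree filtration splits $\coder^*(SL[1])$ into pieces $\Hom^*(L[1]^{\odot n},L[1])$, and $H^*$ of the full complex decomposes accordingly; injectivity of $\bi\colon L\to (M/H)[-1]$ in cohomology is exactly where the surjectivity of $H^*(b)$ gets used, since the quotient $M/H$ sees precisely the linear corestriction component. The delicate point is to match the grading shift $[-1]$ correctly against the shift $L[1]$ and to check that the connecting map in the long exact sequence of the pair $(M,H)$ is identified, up to the Cartan homotopy, with $b$ itself; once that identification is made, the hypothesis $H^*(b)$ surjective dualises to the required injectivity of $H^*(\bi)$ into the quotient. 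Having established all four hypotheses, Theorem \ref{theorem abstract btt} yields that $L$ is homotopy abelian, completing the proof.
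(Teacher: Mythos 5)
Your skeleton is the paper's: take $M=\coder^*(SL[1])$, use left multiplication $\bi_a(w)=a\odot w$ as the Cartan homotopy, and feed everything into Theorem \ref{theorem abstract btt}. But the way you verify the two cohomological hypotheses contains a genuine gap, and you invoke the hypothesis ``$H^*(b)$ surjective'' in the wrong place. Your claim that the inclusion $H\hookrightarrow M$ is injective in cohomology \emph{for free}, because the symmetric-degree filtration splits $\coder^*(SL[1])$ into pieces $\Hom^*(L[1]^{\odot n},L[1])$ ``and $H^*$ of the full complex decomposes accordingly'', is false: the differential $[Q,-]$ preserves only the filtration, not the grading --- bracketing with the $q_2$-component of $Q$ shifts the symmetric degree --- so $H^*(M)$ does not decompose into those pieces. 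A quick sanity check that this step cannot be repaired: with the correct choice $H=\ker b$ (which is what your ``coderivations killing the coaugmentation'', i.e.\ $\alpha(1)=0$, amounts to; your alternative description ``preserving $\bigoplus_{n\ge 1}L[1]^{\odot n}$'' is vacuous, since every coderivation preserves that subspace), hypothesis (4) of Theorem \ref{theorem abstract btt} holds unconditionally, as explained below. So if (3) were also automatic, the corollary would hold with no hypothesis at all and \emph{every} DG-Lie algebra would be homotopy abelian, which is absurd.

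The correct bookkeeping is the opposite of yours. Set $H=\ker b$; since $\alpha(1)\in L[1]^{\odot 1}$ for every coderivation $\alpha$, the map $b$ is a surjective morphism of complexes, giving a short exact sequence $0\to H\to M\xrightarrow{\;b\;} L[1]\to 0$. \emph{This} is where the hypothesis enters: surjectivity of $H^*(b)$ kills the connecting homomorphisms in the long exact sequence, so $H^*(H)\to H^*(M)$ is injective, which is hypothesis (3). Hypothesis (4) is then free, and no ``identification of the connecting map with $b$ up to the Cartan homotopy'' is needed: $b$ identifies $M/H$ with $L[1]$, and the composite $L\xrightarrow{\;\bi\;}(M/H)[-1]\cong L$ sends $a\mapsto p(\bi_a(1))=p(a\odot 1)=a$, i.e.\ it is the identity, hence trivially injective in cohomology. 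Two smaller points: $\bl_a\in H$ is not ``by degree reasons'' but a cancellation, $\bl_a(1)=Q(\bi_a(1))+\bi_{da}(1)=Q(a)+da=-da+da=0$; and the appeal to Transfer Lemma \ref{lemma transfer}(2) in your opening paragraph is a red herring, since $b$ is not a morphism of DG-Lie algebras (for coderivations with corestrictions $(f_0,f_1,\dots)$ and $(g_0,g_1,\dots)$ one finds $b([\alpha,\beta])=f_1(g_0)\pm g_1(f_0)$, which need not vanish); the conclusion comes solely from Theorem \ref{theorem abstract btt}.
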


\begin{proof}
Here we only give a sketch of the proof as a consequence of the abstract BTT theorem, while the original proof contained in \cite{bandieraRendiconti} uses the theory of derived brackets.
Let $M= \coder^*(SL[1])$ and $H=\ker b \subset M$. The hypothesis implies that the inclusion $H\hookrightarrow M$ is injective in cohomology, i.e.,
Hypothesis (3) of  Theorem \ref{theorem abstract btt} is satisfied.
The map 
 \[\bi\colon L \to \coder^*(SL[1]),\qquad
a\longmapsto \bi_a(w)= x \odot w \]
is a properly defined Cartan homotopy.  Moreover, for any $a\in L$, $\bl_a \in H$, and so Hypothesis (1) and  (2)  are fulfilled.
This also implies that it is well defined the morphism of  DG vector spaces $\bi\colon L \to (M/ H)[-1]$, that is injective in cohomology.
Therefore, by   Theorem \ref{theorem abstract btt},
$L$ is homotopy abelian.

\end{proof}

\begin{remark}

According to \cite{bandieraRendiconti}, we say that $L$ has the splitting property if it satisfies the hypothesis of the previous theorem. We refer to \cite[Proposition 2.2]{bandieraRendiconti} for equivalent conditions.
In this paper,  the author proves this result for $L_\infty[1]$-algebras, analysing  the spectral sequence computing the Chevalley-Eilenberg cohomology  and showing the equivalence between the degeneration of the spectral sequence at the first page and  the homotopy abelianity   property.
 In \cite{ManettiFormal}, the author also analyses  the spectral sequence computing the Chevalley-Eilenberg cohomology of a DG-Lie algebra. In particular, he proved the equivalence between degeneration of the spectral sequence at the second page and formality property.
\end{remark}

\begin{remark} 
It can be also proved that the splitting property implies that a DG-Lie algebra $L$ is homotopy abelian  if the adjoint morphism  
\[ ad\colon L\to \Der^*_{\K}(L,L),\qquad ad_x(y)=[x,y],\]
is trivial in cohomology: a detailed proof will appear in the forthcoming paper \cite{homotopy}.
\end{remark}

 \subsection{Derived brackets of Lie type} 

Here we follow \cite[Section 5.6]{LMDT}.
Let $(M, [,],d)$ be a DG-Lie algebra, such that there exist a DG-Lie subalgebra $L$ and a graded vector space $A$ satisfying the following conditions:
\begin{enumerate}
\item [(i)] $M= L\oplus A$ as graded vector space;
\item [(ii)] $[a,b]=0$ for every $a, b \in A$;
\item [(iii)] $ [da,b]\in A$, for every $a,b \in A$.
\end{enumerate}
Consider  the projection $p: M \to A$  and the operators:
\[
\delta: A^i \to A^{i+1}, \qquad \delta a=-pda,
\]
\[
\{ - \ , \ - \}:A^{i-1} \times A^{j-1} \rightarrow A^{i+j-1}, \qquad   \{ a ,\,b \}:=-(-1)^{i}[da,b].
\]
A straightforward computation shows that $(A[-1], \delta, \{ - \ , \ - \})$ is a DG-Lie algebra \cite[Proposition 5.6.6]{LMDT}.

\begin{corollary}
If $L \rightarrow M$ is injective in cohomology, then the DG-Lie algebra $A[-1]=(A[-1], \delta, \{ - \ , \ - \})$ is homotopy abelian.

 \end{corollary}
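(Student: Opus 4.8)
The plan is to read off the statement as a direct instance of the Abstract BTT Theorem \ref{theorem abstract btt}, under the dictionary: the DG-Lie algebra $A[-1]$ plays the role of the theorem's $L$, the DG-Lie algebra $M$ plays the role of its $M$, and the given DG-Lie subalgebra $L\subseteq M$ plays the role of its $H$. The Cartan homotopy will be, up to an overall sign fixed by the shift conventions, the inclusion $A\hookrightarrow M$ viewed as a map of degree $-1$, namely $\bi\colon A[-1]\to M$ with $\bi_a=-a$. Note that for $a\in A[-1]^p=A^{p-1}$ we have $\bi_a\in M^{p-1}$, so $\bi\in\Hom^{-1}_{\K}(A[-1],M)$ as required. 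The associated Lie derivative is $\bl_a=d_M\bi_a+\bi_{\delta a}$, and a one-line computation using $\delta a=-pda$ gives $\bl_a=-(d_Ma)_L$, the $L$-component of $d_Ma$ in the splitting $M=L\oplus A$. In particular $\bl_a\in L$, which is exactly Hypothesis (2) of the theorem with $H=L$.

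For Hypothesis (1), the identity $[\bi_a,\bi_b]=[a,b]=0$ is precisely condition (ii). The second identity $\bi_{\{a,b\}}=[\bi_a,d_M\bi_b]$ (where $\{-,-\}$ is the bracket of the source $A[-1]$) is where the derived-bracket definition enters: applying the graded Leibniz rule to the relation $[a,b]=0$, valid by (ii), relates $[da,b]$ and $[a,db]$ up to a sign, and comparing the outcome with the defining formula $\{a,b\}=-(-1)^{i}[da,b]$ shows that $\bi_{\{a,b\}}$ and $[\bi_a,d_M\bi_b]$ coincide. Here condition (iii) guarantees that $[da,b]\in A$, so the whole computation stays inside the intended subspaces. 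This is the step that demands the most care, since it is a pure sign-bookkeeping verification matching the normalization of $\delta$ and $\{-,-\}$ against the normalization of $\bi$; it is routine but is the one genuine obstacle in the argument.

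It then remains to check Hypotheses (3) and (4), both of which are immediate. Hypothesis (3), that $H=L\hookrightarrow M$ is injective in cohomology, is precisely the hypothesis of the corollary. Hypothesis (4) is automatic from the splitting: the decomposition $M=L\oplus A$ identifies the quotient complex $M/H=M/L$ with $A$ via the projection $p$, and under this identification the induced morphism of complexes $\bi\colon A[-1]\to (M/L)[-1]$ is, up to sign, the identity of $A[-1]$. Being a morphism of complexes that is an isomorphism of the underlying graded vector spaces, it is an isomorphism of complexes, hence bijective, and in particular injective, in cohomology.

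With all four hypotheses verified, Theorem \ref{theorem abstract btt} applies and yields that $A[-1]$ is homotopy abelian. In summary, once the sign of $\bi$ is chosen compatibly with the definitions of $\delta$ and $\{-,-\}$, Hypotheses (1) and (2) encode the Cartan calculus coming from (ii) and (iii), Hypothesis (4) is a formal consequence of the direct-sum splitting $M=L\oplus A$, and the single nontrivial input is Hypothesis (3), namely the assumption that $L\hookrightarrow M$ is injective in cohomology.
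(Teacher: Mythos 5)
Your proof is correct and takes essentially the same route as the paper: both apply Theorem \ref{theorem abstract btt} with $H=L$ and the Cartan homotopy given by the shifted inclusion of $A$ into $M$, checking Hypothesis (1) from conditions (ii)--(iii) via the Leibniz rule applied to $[a,b]=0$, Hypothesis (2) from $p(\bl_a)=0$, Hypothesis (3) by assumption, and Hypothesis (4) from the splitting $M=L\oplus A$. The only cosmetic differences are your sign normalization $\bi_a=-a$ (the paper takes $\bi_a=a$; with the stated conventions for $\delta$ and $\{-,-\}$ your choice is in fact the consistent one, since the paper's displayed chain $-(-1)^i[da,b]=[a,db]$ conceals a sign slip) and your more explicit verification of Hypothesis (4) as an isomorphism of complexes, which the paper compresses into ``satisfied by assumption.''
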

 \begin{proof}
The linear map:
\[
\bi : A[-1]\to M \qquad \bi_a=\bi (a) =a
\]
is a Cartan homotopy. Indeed, for every $a,b\in A[-1]$, we have  $[\bi_a,\bi_{b}] =[a,b]=0$, by Condition (ii) above. Moreover, 
for every $a\in A[-1]^i$ and $b \in A[-1]$, 
 we have $0= d([a,b])= [da,b] + (-1)^{i-1}[a,db]$ and so 
\[
 \bi_{\{a,b\}}=  \{ a, \,b \}=-(-1)^{i}[da,b]=[a,db]=[\bi_a,d\bi_b].
\]
Note that, for every $a \in A[-1]$, we have
\[ 
p(\bl_a)=p(d \bi_a+\bi_{\delta a})= p(da -pda)=0
\]
and so  for every  $a \in A[-1]$, we have  $\bl_a\in L$, i.e., Hypothesis (2) of Theorem  \ref{theorem abstract btt}  is satisfied.
The last two hypotheses of Theorem  \ref{theorem abstract btt} are satisfied by assumption, and so we conclude that 
 $A[-1]=(A[-1], \delta, \{ - \ , \ - \})$ is homotopy abelian.
 \end{proof}

\begin{example} \cite[Section 5.6]{LMDT}.
Let $V$ and $W$ be two graded vector spaces over $\K$ and $\pi \in \Hom^1_\K (W,V)$, such that $0=d_{\Hom} \pi=d_W\pi + \pi d_V$. In \cite[Section 5.6]{LMDT}, the author introduced the notion of the derived bracket of $\pi$ on $\Hom^*_\K (V,W)[-1]$
defined as:
\[
[ - \ , - \ ]_\pi: (\Hom^*_\K (V,W)[-1])^i \times (\Hom^*_\K (V,W)[-1])^j \to (\Hom^*_\K (V,W)[-1])^{i+j}
\]
 \[
[f,g]_\pi=f\pi g - (-1)^{ij} g\pi f.
 \]
On the graded vector space  $\Hom^*_\K (V,W)[-1]$, it is also defined a differential $\delta$ as:
\[
\delta(f)= -d_W f - (-1)^i f d_V,  \qquad \forall \ f \in (\Hom^*_\K (V,W)[-1])^i. \]
Then, $A[-1]=(\Hom^*_\K (V,W)[-1], [ - \ , - \ ]_\pi , \delta)$ is a DG-Lie algebra.
To view this example in the above setting, it is enough to consider
the DG-Lie algebra $M=\Hom^*_\K (V\oplus W, V\oplus W)$ with differential given by $[D, \ - ]$ where 
\[
D=   \begin{pmatrix}
 d_V&-\pi\\
 0&d_W\\
\end{pmatrix} : V\oplus W \rightarrow V\oplus W.
\]
It turns out that $\{ - \ , \ - \} = [ - \ , - \ ]_\pi$; indeed, for every $a\in A[-1]^i$ and  $b\in A[-1]^j$, we have
\[\{ a,b\}=-(-1)^{i}[Da,b]= -(-1)^{i}[[D,a],b].
\]
By definition, 
\[\begin{split}[[D,a],b] &= [Da - (-1)^{i-1} aD, b]\\
&= Dab - (-1)^{i-1} aD b -(-1)^{i(j-1)}(bDa - (-1)^{i-1} baD)\\
&=Dab - (-1)^{i-1} aD b -(-1)^{i(j-1)}bDa - (-1)^{ij} baD.
\end{split}\]
Viewing $a$ and $b$ as
\[
a=   \begin{pmatrix}
 0&0\\
 a&0\\
\end{pmatrix}, \ \ b=   \begin{pmatrix}
 0&0\\
 b&0\\
\end{pmatrix}  : V\oplus W \rightarrow V\oplus W,
\]
we have
\[
Da=  \begin{pmatrix}
 -\pi a&0\\
 d_W a&0\\
\end{pmatrix}; 
\]
thus $Dab=abD=0$, and 
\[
bDa=  \begin{pmatrix}
 0&0\\
 b&0\\
\end{pmatrix} \cdot  \begin{pmatrix}
 -\pi a&0\\
 d_w a&0\\
\end{pmatrix} =\begin{pmatrix}
 0&0\\
 -b\pi  a&0\\
\end{pmatrix},  \qquad   aDb=   \begin{pmatrix}
 0&0\\
 -a\pi  b&0\\
\end{pmatrix}.
\]
Therefore,
\[\{ a,b\}=-(-1)^{i}[Da,b]= - aD b +(-1)^{ij}bDa
 =  a\pi b - (-1)^{ij} b\pi a= [a,b]_\pi .
\]

Finally,  if the inclusion $L \rightarrow M$ is injective in cohomology, then the 
DG-Lie algebra $A[-1]=(\Hom^*_\K (V,W)[-1], [ - \ , - \ ]_\pi , \delta)$ is homotopy abelian.

\end{example}

\section{Further applications}\label{section further appli}
 
According to Remark \ref{remark dopo teorema diagramma semireg}, if 
$L$, $ M$ and $H\subseteq M$  are  DG-Lie algebras, such that $\chi: H \hookrightarrow M$  is injective in cohomology and  $\bl_a\in H$ for every $a\in L$, we have a diagram:
\[\xymatrix{ TW(\alpha) \ar[d]^p \ar[r]^\phi  & TW(\chi)     \\
 L,}\]
where $TW(\chi) $ is an  homotopy abelian DG-Lie algebras and the vertical arrow is a quasi isomorphism.
In particular, we have a morphism:
\[
s: H^2(L) \xrightarrow{H^2(p)^{-1}} H^2(TW(\alpha))\xrightarrow{H^2(\phi)} H^2(TW(\chi))
\]
Since  $\Def_{TW(\chi)}$ is smooth, $s$ annihilates all the obstructions of $\Def_L$. 
 This idea has been applied in various deformation cases. For instance, 
in \cite[Proposition 4.6]{Periods}, the authors consider the deformations of compact K\"ahler manifolds and they prove the Kodaira's principle, ambient cohomology annihilates obstructions.
A semiregularity map annihilating all the obstructions to the infinitesimal deformations  of holomorphic maps of compact K\"ahler manifolds with fixed codomain 
were analysed in \cite[Corollary 4.14]{IaconoSemireg}.
In \cite[Theorem 11.1]{semireg}, it is proved that the Bloch's semiregularity map annihilates all the obstructions to embedded deformations of a local complete intersection, extending the proof given in \cite[Theorem 9.1]{ManettiSemireg}  for the embedded deformations of a smooth submanifold.

\begin{acknowledgement}
The author wishes
to thank Prof. Marco Manetti for useful discussion during the preparation of the paper and for the draft uncompleted version of \cite{LMDT}. The paper is largely inspired by the lectures held by Prof. Marco Manetti at GAP-XV at State College.
The author was partially supported by Fondi di Ateneo dell'Universit\`a degli Studi di Bari: \lq\lq Spazi di Moduli e Algebre di Lie Differenziali Graduate\rq\rq. The author thanks the referee for improvements in the presentation of the paper and Prof. O. Fujino for pointing my attention to \cite{fujino1}.
\end{acknowledgement}

\end{document}